\documentclass[12pt]{article}        

 \usepackage[T1]{fontenc}             
 \usepackage[width=16cm, height=22cm]{geometry}   

 \usepackage{mathtools} 
 \usepackage{amssymb}          
\usepackage{hyperref} 
 \usepackage[amsmath,thmmarks,hyperref]{ntheorem}

 \usepackage{icomma}                   
 \usepackage{enumerate}          
 \usepackage{color}                 
 \usepackage{setspace}                 
 \usepackage{needspace}               
 \usepackage{url}                    
 \usepackage{mathrsfs}                
 \usepackage{esint}    
 \usepackage{slashed}
 \usepackage{verbatim}

 \setlength\parindent{0pt}
   
 \numberwithin{equation}{section}
 
 \usepackage[numbers,square]{natbib}
 \bibliographystyle{abbrv}
 
  \setcounter{MaxMatrixCols}{20}
 
 \usepackage{tikz}
\usetikzlibrary{arrows.meta}
\usetikzlibrary{cd}

\newcounter{comments}
\newenvironment{displaycomment}{\begin{list}{}{\rightmargin=0cm\leftmargin=1cm}\item\sf\begin{footnotesize}\color{blue}}{\end{footnotesize}\end{list}}

\allowdisplaybreaks


\theoremstyle{nonumberplain}  
\theoremheaderfont{\itshape}  
\theorembodyfont{\normalfont}  
\theoremseparator{.}  
\theoremsymbol{$\Box$}
\newtheorem{proof}{Proof} 
  
\theoremstyle{plain}  
\theoremheaderfont{\normalfont\bfseries}  
\theorembodyfont{\itshape}  
\theoremsymbol{~}
\theoremseparator{.}  
\newtheorem{proposition}{Proposition}[section]  
\newtheorem{corollary}[proposition]{Corollary}  
  
\newtheorem{lemma}[proposition]{Lemma}  
  
\newtheorem{theorem}[proposition]{Theorem}   

\theorembodyfont{\normalfont}  
 
\newtheorem{remark}[proposition]{Remark}

\newtheorem{definition}[proposition]{Definition}

\theoremstyle{nonumberplain}
\theoremheaderfont{\normalfont\bfseries}  
\theorembodyfont{\itshape}  
\theoremsymbol{~}
\theoremseparator{.}


\newcommand{\R}{\mathbb{R}}
\newcommand{\V}{\mathcal{V}}
\newcommand{\N}{\mathbb{N}}
\newcommand{\Z}{\mathbb{Z}}
\newcommand{\C}{\mathbb{C}}
\newcommand{\dd}{\mathrm{d}}

\newcommand{\tr}{\mathrm{tr}}
\newcommand{\End}{\mathrm{End}}

\newcommand{\Ber}{\mathsf{B}}
\newcommand{\ber}{\mathsf{b}}

\newcommand{\cc}{\mathbf{c}}

\DeclareMathOperator{\str}{\mathrm{str}}

\newcommand{\id}{\mathrm{id}}

\renewcommand{\tilde}{\widetilde}

\newcommand{\T}{\mathbb{T}}

\renewcommand{\L}{\text{\normalfont\sffamily L}}

\newcommand{\pf}{\mathrm{pf}}
\newcommand{\Pf}{\mathrm{Pf}}
\newcommand{\sign}{\mathrm{sign}}
\newcommand{\sgn}{\mathrm{sgn}}
\newcommand{\Spin}{\mathrm{Spin}}

\newcommand{\Cl}{\mathrm{Cl} }
\renewcommand{\top}{\mathrm{top}}

\newcommand{\defeq}{\stackrel{\mathrm{def}}{=}}

\newcommand{\NZ}[1]{\marginpar{#1}}

\title{The Fermionic integral on loop space and the {P}faffian line bundle}
\author{Florian Hanisch\footnote{Universit\"at Potsdam. fhanisch@uni-potsdam.de} ~and Matthias Ludewig\footnote{Universit\"at Regensburg, matthias.ludewig@mathematik.uni-regensburg.de}}


\begin{document}

\maketitle

\begin{abstract}
 As the loop space of a Riemannian manifold is infinite-dimensional, it is a non-trivial problem to make sense of the ``top degree component'' of a differential form on it. In this paper, we show that a formula from finite dimensions generalizes to assign a sensible ``top degree component'' to certain composite forms, obtained by wedging with the exponential (in the exterior algebra) of the canonical 2-form on the loop space. The result is a section on the Pfaffian line bundle on the loop space. We then identify this with a section of the line bundle obtained by transgression of the spin lifting gerbe. These results are a crucial ingredient for defining the fermionic part of the supersymmetric path integral on the loop space.
\end{abstract}

\section{Introduction}

An important part in the task of understanding the geometry and topology of the loop space $\L X$ of a  finite-dimensional manifold $X$ is the study of its differential forms. 
In finite dimensions, a key feature of differential forms is that they can be {\em integrated}, which gives a linear functional on the space of differential forms.
One of the fundamental properties of this integration functional is that it is only non-zero on forms of top degree; at first glance, it therefore seems hopeless to define integration in an infinite-dimensional situation, as there are differential forms of arbitrarily high degree.

Indeed, there is no reasonable way to define the top degree component $[\theta]_{\mathrm{top}}$ of a general differential form $\theta \in \Omega(\L X)$. However, in this paper we show that if we fix a Riemannian metric on $X$ and define the canonical two form on the loop space by\footnote{Throughout, we write $\T = S^1 = \R/\Z$ and loops are denoted by $\gamma: \R \rightarrow X$.}
\begin{equation} \label{CanonicalTwoForm}
  \omega (V, W) \defeq \int_{\T} \bigl\langle V(t), \nabla_{\dot{\gamma}} W(t)\bigr\rangle\, \dd t,
\end{equation}
for $V, W \in T_\gamma \L X = C^\infty(\T, \gamma^*TX)$, then there is a natural way to make sense of the top-degree component of the {\em composite form} $e^{-\omega} \wedge \theta$ for suitable forms $\theta$, using analogies to the finite-dimensional situation.   Here $e^{-\omega}$ denotes the exponential of ${-\omega}$ in the algebra $\Omega(\L X)$. As we did not fix an ``orientation'' on the loop space, this top degree component $[e^{-\omega} \wedge \theta]_{\mathrm{top}}$ will be a section of the Pfaffian line bundle on $\L X$. 

Our starting point is the Pfaffian formula
 \begin{equation}  \label{TopDegreeFiniteDimensionsIntro}
    \bigl[e^{{-\omega}} \wedge \vartheta_1 \wedge \cdots \wedge \vartheta_N\bigr]_{\mathrm{top}} = \pf\Bigl( \langle \vartheta_i, A^{-1} \vartheta_j \rangle \Bigr)_{1 \leq i, j \leq N} \cdot \pf(A),
  \end{equation}
 which holds for any invertible skew-symmetric matrix $A$ with induced 2-form $\omega[v, w] = \langle v, A w\rangle$ on a finite-dimensional vector space $V$ with scalar product, and any collection $\vartheta_1, \dots, \vartheta_N \in V^\prime$ (there is a more complicated formula in case that $A$ has a non-trivial kernel, see Prop.~\ref{Prop:FermionicWick} below). Without the datum of an orientation, the Pfaffian $\pf(A)$ is not a number but an element of the line $\Lambda^{\mathrm{top}} V^\prime$, the top exterior power of the dual  $V^\prime$. 
 
Applying formula \eqref{TopDegreeFiniteDimensionsIntro} to the infinite-dimensional situation by analogy, where with a view on \eqref{CanonicalTwoForm}, we have $A = \nabla_{\dot{\gamma}}$, the covariant derivative along a loop $\gamma \in \L X$, it turns out that $A^{-1} = \nabla_{\dot{\gamma}}^{-1}$ is a well-defined bounded operator, so the scalar products $\langle \theta_i, A^{-1} \theta_j \rangle_{L^2}$ for (square-integrable) 1-forms $\theta_1, \dots, \theta_N$ are well-defined. The Pfaffian of $\nabla_{\dot{\gamma}}$ on the other hand lives in the \emph{Pfaffian line} $\Pf_\gamma$. Extending linearly, this gives a canonical interpretation of the top degree component of $e^{-\omega} \wedge \theta$ for any form $\theta$ that is a sum of wedge product of (square-integrable) one-forms. This top degree is a section of the Pfaffian line bundle, which we also denote by $[e^{-\omega} \wedge \theta]_{\mathrm{top}}$, by abuse of notation.

 A spin structure on $X$ provides a trivialization of the Pfaffian line bundle \cite{PratWaldron, StolzTeichner, Waldorf1}, thus turning the top degree of a differential form into a \emph{function} on  $LX$. The following result derives an explicit formula for this function in terms of spin geometry and is the main result of this paper.
 
 \begin{theorem} \label{MainThm1Intro}
Let $X$ be a spin manifold. Then under the canonical trivialization of the Pfaffian line bundle provided by the spin structure, we have the formula
\begin{equation} \label{FormulaqIntro}
[e^{-\omega} \wedge \theta_1 \wedge \cdots \wedge \theta_N]_{\mathrm{top}}  =  {2^{-N/2}}\sum_{\sigma \in S_N} \mathrm{sgn}(\sigma) \int_{\Delta_N} \str \left([\gamma\|_{\tau_1}^0]^\Sigma \prod_{a=1}^N \cc\bigl(\theta_{\sigma_a}(\tau_a)\bigr) [\gamma\|_{\tau_{a+1}}^{\tau_a}]^\Sigma\right)\dd \tau.
\end{equation}
for the top degree component, initially defined using \eqref{TopDegreeFiniteDimensionsIntro} respectively its generalization to non-invertible $A$.
Here $[\gamma\|_\bullet^\bullet]^\Sigma$ denotes parallel transport in the spinor bundle along the loop $\gamma$, $\cc$ denotes Clifford multiplication, $S_N$ denotes the $N$-th symmetric group and
  \begin{equation*}
    \Delta_N \defeq \{\tau = (\tau_N, \dots, \tau_1) \mid 0 \leq \tau_1 \leq \dots \leq \tau_N \leq 1\}
  \end{equation*}
  is the standard simplex. 
\end{theorem}

Formula \eqref{FormulaqIntro} is very much in the spirit of the Hamiltonian-Lagrangian correspondence, as explained by Bismut in \S3.12 of \cite{BismutDH}. We find this result quite remarkable, as the original definition \eqref{TopDegreeFiniteDimensionsIntro} in terms of Pfaffians makes no reference to spin geometry or parallel transport whatsoever. 

If $X$ is not necessarily spin, the right hand side of \eqref{FormulaqIntro} still has a canonical interpretation as an element $\Ber(\theta_N \wedge \cdots \wedge \theta_1)$ of the line $\mathcal{L}_\gamma$, where $\mathcal{L}$ is the line bundle over $\L X$, obtained by transgression of the spin lifting gerbe. This line bundle is canonically isomorphic to the Pfaffian line, and we prove a generalization of Thm.~\ref{MainThm1Intro} which states that the isomorphism maps these canonical sections to each other (see Thm.~\ref{ThmMainTheoremLvsPf}). This generalizes a result of Prat-Waldron \cite[Thm.~B]{PratWaldron}. 

\medskip

In case that $X$ is a compact spin manifold, the right hand side of \eqref{FormulaqIntro} may be integrated with respect to the Wiener measure, thereby making sense of the loop space differential form integral of $e^{-\omega} \wedge \theta$ for a wide class of integrands $\theta$. This is the path integral of the $\mathcal{N}=1/2$ supersymmetric $\sigma$-model, which will be discussed in detail in our paper \cite{HanischLudewig1}. Together with \cite{HanischLudewig1}, the present paper clarifies the connection between this path integral and the loop space Chern character constructed in \cite{GueneysuLudewig}.

\medskip

Fermionic integrals, such as the one considered in this paper, have been studied extensively in mathematics and theoretical physics. Indeed, formula \eqref{TopDegreeFiniteDimensionsIntro} closely resembles the construction of the Berezin integral on finite-dimensional supermanifolds, where superfunctions can be viewed as section of the exterior power of a certain vector bundle. Integration is then defined by first projecting out the top degree part of the integrand and then performing an ordinary integral of the resulting function with respect to some volume measure. Moreover, Pfaffians have been used by many authors to define the Fermionic analog of a Gaussian integral, see e.g., \cite{Lott}, \cite{MR1689252} and in particular \cite{MR1905424} as well as references therein. Aiming at a rigorous construction of certain physical field theories, \cite{MR1905424} also constructs infinite-dimensional Grassmann integrals through approximation by finite-dimensional Pfaffians and it should be interesting to relate our approach to theirs.

\medskip

The plan of the paper is as follows. After introducing some preliminaries on loop space differential forms and spin geometry needed in the sequel, we prove a formula (to our knowledge originally by Atiyah) relating the zeta-regularized determinant to the parallel transport in the spinor bundle. This is needed  in the sequel and gives the special case $N=0$ in Thm.~\ref{MainThm1Intro}. In \S\ref{SectionLineBundles}, we introduce the Pfaffian line bundle $\Pf$ and the Spin line bundle $\mathcal{L}$ over $\L X$ and construct the canonical isomorphism between the two in a language suited for the calculations to come. Then in \S\ref{SectionTopDegree}, we can finally construct the top degree map as an element of the Pfaffian line, the image of which under the previously constructed canonical isomorphism is calculated in \S\ref{SectionMainTheorem}.

\paragraph{Acknowledgements.} 
We are indebted to Batu G\"uneysu and Achim Krause for helpful discussions.
We thank the Max-Planck-Institute for Gravitational Physics in Potsdam (Albert-Einstein-Institute), the Max-Planck-Institute for Mathematics in Bonn, the Institute for Mathematics at the University of Potsdam and the University of Adelaide for hospitality and financial support.  The second-named author was supported by the Max-Planck-Foundation and the ARC Discovery Project grant FL170100020 under Chief Investigator and Australian Laureate Fellow Mathai Varghese.

\tableofcontents

\subsection{Preliminaries I: Loop Space Differential Forms} \label{SectionLoopSpaceForms}

In this section, we give a brief overview of the theory of differential forms on the loop space $\L X$ of a Riemannian manifold $X$. 

\paragraph{The loop space.}
The {\em (smooth) loop space} of $X$ is the space $\L X = C^\infty(\T, X)$. It has the structure on an infinite-dimensional manifold, modelled on the Fréchet space $C^\infty(\T, \R^n)$. Here throughout, we denote $\T = S^1 = \R/ \Z$. For the tangent space at a loop $\gamma \in \L X$, we have the natural identifications
\begin{equation} \label{LoopTangentSpace}
  T_\gamma \L X \cong C^\infty(\T, \gamma^*TX),
\end{equation}
with the space of smooth vector fields along $\gamma$, i.e., sections of the pullback bundle $\gamma^*TX$ over $\T$. It is given by mapping a smooth variation $\gamma_s, s \in (-\varepsilon, \varepsilon)$, of loops to the vector field $t \mapsto \tfrac{\partial}{\partial s}|_{s=0} \gamma_s(t)$ along $\gamma = \gamma_0$. Dually, we have the identification
\begin{equation*}
  T^\prime_\gamma \L X = \mathscr{D}^\prime(\T, \gamma^*T^\prime X) \defeq \bigl( C^\infty(\T, \gamma^* TX) \bigr)^\prime
\end{equation*}
of the cotangent space with the space of $\gamma^*T^\prime X$-valued distributions on $\T$.

The tangent spaces carry a natural scalar product, given by the $L^2$ scalar product
\begin{equation} \label{L2ScalarProduct}
   \langle V, W\rangle_{L^2} \defeq \int_{S^1} \bigl\langle V(t), W(t) \bigr\rangle \dd t
\end{equation}
for $V, W \in T_\gamma \L X$. Of course, $T_\gamma \L X$ is not complete with respect to this scalar product; the completion with respect to the norm induced from \eqref{L2ScalarProduct} is the space $L^2(S^1, \gamma^* TX)$ of square-integrable vector fields along $\gamma$.

\paragraph{Differential forms on infinite-dimensional manifolds.} Let $Y$ be smooth manifold, modelled on a (possibly infinite-dimensional) locally convex space. It turns out that the ``correct'' definition of differential $\ell$-forms on $Y$ is  
\begin{equation} \label{DefinitionOfFormsInGeneral}
\Omega^\ell(Y) \defeq C^\infty\bigl(Y, L_{\mathrm{alt}}^\ell(TY, \R)\bigr),
\end{equation}
the space of smooth sections of the vector bundle  $L_{\mathrm{alt}}^\ell(TY, \R)$ over $Y$.  The fiber of this bundle at $y \in Y$ is the space of bounded, alternating multilinear functionals on $T_y Y$ \cite[\S33]{KrieglMichor}; here smoothness of a mapping always means that smooth curves are mapped to smooth curves (i.e. smoothness in the sense of convenient calculus \cite{KrieglMichor}). With this definition, one has a well-defined wedge product, exterior differential, pullback maps and Lie derivatives, just as in finite dimensions (in contrast to several other possible definitions, e.g., sections of the exterior power $\Lambda^\ell T^\prime Y$ of the cotangent bundle, see  \cite[33.21]{KrieglMichor}). 
Setting $Y = \L X$, we let
\begin{equation} \label{LoopSpaceForms}
  \Omega(\L X) = \bigoplus_{\ell=0}^\infty \Omega^\ell(\L X).
\end{equation}
be the algebra of differential forms on $\L X$.

\paragraph{The bundle $\boldsymbol{L_{\mathrm{alt}}^\ell(T\L X, \R)}$.} In particular, for the manifold $Y = \L X$, we  are lead to understand the spaces $L_{\mathrm{alt}}^\ell(T_\gamma \L X, \R)$ of bounded alternating $\ell$-linear functionals on $T_\gamma \L X = C^\infty(\T, \gamma^*TX)$, for each $\gamma \in \L X$; compare \eqref{LoopTangentSpace}. It turns out that these can be identified with a certain space of bundle-valued distributions on the torus $\T^\ell$. To this end, we need the $\ell$-th \emph{exterior} tensor product of the bundle $\gamma^*TX$ over $\T$ to obtain a bundle $\gamma^*TX^{\boxtimes \ell}$ over $\T^\ell$, the fiber of which at $\tau = (\tau_1, \dots, \tau_\ell) \in \T^\ell$ is
\begin{equation*}
(\gamma^*TX^{\boxtimes \ell})_\tau = T_{\gamma(\tau_1)}X \otimes \cdots \otimes T_{\gamma(\tau_\ell)} X.
\end{equation*}
The bundle $\gamma^*T^\prime X^{\boxtimes \ell}$ is constructed similarly. The relevant space of distributions is now the space
\begin{equation*}
  \mathscr{D}^\prime(\T^\ell, \gamma^*T^\prime X^{\boxtimes \ell}) \defeq \bigl(C^\infty(\T^\ell, \gamma^*TX^{\boxtimes \ell})\bigr)^\prime,
\end{equation*}
the dual space of the space of smooth sections of $\gamma^*TX^{\boxtimes \ell}$.
Any such distribution $\theta$ determines an element of $L^\ell(T_\gamma \L X, \R)$, again denoted by $\theta$, via the formula
\begin{equation} \label{DistributionInducesMultilinearFunctional}
  \theta(V_1, \dots, V_\ell) \defeq \langle \theta, V_1 \otimes \cdots \otimes V_\ell\rangle
\end{equation}
where the right hand side denotes the dual pairing of $\theta$ with the test section $V_1 \otimes \cdots \otimes V_\ell \in C^\infty(\T^\ell, \gamma^*TX^{\boxtimes \ell})$. 

\begin{lemma} \label{LemmaIdentificationOfFiberwiseForms}
    For each $\gamma \in \L X$, the identification \eqref{DistributionInducesMultilinearFunctional} provides an isomorphism of topological vector spaces
    \begin{equation*}
      L^\ell(T_\gamma \L X, \R) \cong \mathscr{D}^\prime(\T^\ell, \gamma^*TX^{\boxtimes \ell})
    \end{equation*}
\end{lemma}

\begin{proof}
It is easy to see that the assignment \eqref{DistributionInducesMultilinearFunctional} of an $\ell$-linear functional to a distribution is injective.

To see that each element of $L^\ell(T_\gamma \L X, \R)$ is given by a distribution, notice that any such element is by definition a bounded $\ell$-linear functional on $T_\gamma \L X = C^\infty(\T, \gamma^*TX)$ or, equivalently, a bounded {\em linear} form on the $\ell$-fold bornological tensor product $C^\infty(\T, \gamma^*TX)^{\otimes_\beta \ell}$. Since $C^\infty(\T, \gamma^*TX)$ is a nuclear Fr\'echet space, the bornological tensor product coincides with the projective tensor product \cite[Thm.~1.91]{Meyer} and bounded linear maps are continuous \cite[Thm.~1.29]{Meyer}. Hence we have
\begin{equation*}
  L^\ell(T_\gamma \L X, \R) = \Bigl(\underbrace{C^\infty(\T, \gamma^*T^\prime X) \otimes_\pi \cdots \otimes_\pi C^\infty(\T, \gamma^*T^\prime X)}_\ell\Bigr)^\prime,
\end{equation*}
where $\otimes_\pi$ denotes the projective tensor product. Using \cite[Thm.~44.1]{MR0225131}, we can identify the $\ell$-fold tensor product with $C^\infty(\T^\ell, \gamma^*TX^{\boxtimes \ell})$.
\end{proof}

By $\mathscr{D}^\prime(\T^\ell, \gamma^*T^\prime X^{\boxtimes \ell})^{S_\ell}$, we denote the subspace of distributions such that the $\ell$-linear functional defined by \eqref{DistributionInducesMultilinearFunctional} is alternating. To give an another description, notice that the space $C^\infty(\T^\ell, \gamma^*TX^{\boxtimes \ell})$ has a natural $S_\ell$-action by signed permutation of the factors, explicitly
\begin{equation} \label{SNActionOnSection}
  (\sigma \cdot V)(\tau_1, \dots, \tau_\ell) \defeq \sgn(\sigma) V(\tau_{\sigma_1}, \dots, \tau_{\sigma_\ell}).
\end{equation}
The space $\mathscr{D}^\prime(\T^\ell, \gamma^*T^\prime X^{\boxtimes \ell})^{S_\ell}$ can now equivalently be described as the dual space of the space of $S_\ell$-invariant sections $(C^\infty(\T^\ell, \gamma^*TX^{\boxtimes \ell})^{S_\ell}$. We conclude from Lemma~\ref{LemmaIdentificationOfFiberwiseForms} that for any $\gamma \in \L X$, we have the identification
\begin{equation*}
 L_{\mathrm{alt}}^\ell(T_\gamma \L X, \R) = \mathscr{D}^\prime(\T^\ell, \gamma^*T^\prime X^{\boxtimes \ell})^{S_\ell}.
\end{equation*}

\subsection{Preliminaries II: Spin Geometry} \label{SectionSpinPreliminaries}

In this section, we give a quick account of some notions of spin geometry needed in this paper.

\paragraph{Spin structures.} Let $Y$ be a manifold and let $\V$ be an $n$-dimensional oriented Euclidean vector bundle over $Y$. Equivalently, this means that the frame bundle of $\V$ has its structure group reduced to the special orthogonal group $\mathrm{SO}_n$. 

A {\em spin structure} for $\V$ is then a lift of the structure group to $\Spin_n \rightarrow \mathrm{SO}_n$, in other words, a $\Spin_n$-principal bundle $P \rightarrow \mathrm{SO}(\V)$ covering the special orthogonal frame bundle in a fashion compactible with the action of $\mathrm{SO}_n$  \cite[II\S1]{LawsonMichelsohn}. In the case that $Y = X$ is an oriented Riemannian manifold and $\V = TX$ is the tangent bundle of $X$, this gives the usual notion of a spin structure on the manifold $X$. 

We will also consider the case where $Y = \T$ and $\V = \gamma^*TX$, for $\gamma$ a loop in an $n$-dimensional oriented Riemannian manifold. In this case, spin structures are classified by $H^1(\T,  \Z_2) = \Z_2$, in particular, there are two isomorphism classes of spin structures on $\gamma^*TX$. Given a spin structure $P$, we obtain its {\em opposite} by setting $-P = P \times_{\Z_2} M$, where $M\rightarrow \T$ is the non-trivial $\Z_2$-principal bundle (the {\em Moebius bundle} or {\em Hopf bundle}) and we divide by the diagonal $\Z_2$-action.

\paragraph{The real spinor bundle.} Given a spin structure $P \rightarrow \mathrm{Fr}(\V)$ on an $n$-dimensional oriented Euclidean vector bundle $\V \rightarrow Y$, we can form the associated {\em (real) spinor bundle}
\begin{equation} \label{AssociatedSpinorBundle}
  \Sigma \defeq P \times_{\Spin_n} \Cl_n,
\end{equation}
where $\Cl_n = \Cl(\R^n)$ is the Clifford algebra on $\R^n$. Here (as usual) we identify $[p\cdot g, a] = [p, g \cdot a]$, $g \in \Spin_n$, inside $P \times \Cl_n$, where the action of $\Spin_n$ on $\Cl_n$ is by left multiplication, after realizing $\Spin_n$ inside the even part of the Clifford algebra.

The bundle $\Sigma$ defined in \eqref{AssociatedSpinorBundle} is naturally a bundle of graded $\Cl(\V)$-$\Cl_n$-bimodules on $Y$ (where the grading comes from the even/odd grading of the Clifford algebra). Here $\Cl(\V)$ is the bundle over $Y$ with fiber over $x$ the Clifford algebra on $\V_x$. We denote the corresponding {\em Clifford map} by
\begin{equation} \label{CliffordMap}
  \cc: \V \rightarrow \Cl(\V).
\end{equation}
It satisfies the usual Clifford relations
\begin{equation*}
  \cc(v)\cc(w) + \cc(w)\cc(v) = - 2 \langle v, w\rangle.
\end{equation*}
We denote by $\End_{\Cl_n}(\Sigma)$ the space of endomorphisms of $\Sigma$ that commute with the right action of $\Cl_n$. Such an endomorphism is always given by left multiplication by an element $a$ of the Clifford algebra $\Cl(\V)$ and we define its {\em supertrace} by the formula
\begin{equation} \label{DefinitionSupertrace}
  \str(a) = 2^{n/2} \langle a, \cc(e_1) \cdots \cc(e_n)\rangle,
\end{equation}
for  an oriented orthonormal basis $e_1, \dots, e_n$ of $\V$. This is indeed a supertrace, in the sense that it has the graded cyclic permutation property
\begin{equation} \label{CyclicPermutation}
   \str(ab) = (-1)^{|a||b|}\str(ba)
\end{equation}
on homogeneous elements $a, b \in \Cl(\V)$.

\paragraph{Comparison to the complex spinor bundle.} The advantage of the real spinor bundle is that it is graded in any dimension. If $P\rightarrow Y$ is a $\Spin_n$-principal bundle, we can also form the {\em complex} spinor bundle by
\begin{equation*}
  \Sigma_\C \defeq P \times_{\Spin_n} S,
\end{equation*}
where $S$ is the complex spinor representation. In the case that $n$ is even, $S$ is graded, which induces a grading $\Sigma_\C = \Sigma_\C^+ \oplus \Sigma_\C^-$ on the complex spinor bundle. We can therefore take the operator supertrace $\str_\C$, which is related to  \eqref{DefinitionSupertrace} by the formula
\begin{equation} \label{TraceComparisonEven}
  \str_\C(a) = (-i)^{n/2}\str(a)
\end{equation}
for $a \in \End(\Sigma_\C) \cong \Cl_n \otimes \C$ (here $\str$ as defined in \eqref{DefinitionSupertrace} is complex linearly extended to $\Cl_n \otimes \C$). In contrast, if $n=2m+1$ is odd, then we have the formula
\begin{equation} \label{TraceComparisonOdd}
  \tr_\C(a) = i(2i)^m \langle a, \cc(e_1) \cdots \cc(e_n)\rangle + 2^m \langle a, \mathbf{1}\rangle
\end{equation}
for the endomorphism trace of $a \in \Cl(\V) \subset \End(\Sigma_\C)$.

\section{Atiyah's formula} \label{SectionAtiyahsFormula}

Let $M$ be an oriented Riemannian manifold. In this section, we calculate the parallel transport in the spinor bundle and the zeta-regularized determinant of the covariant derivative along a loop, both in terms of the parallel transport in the tangent bundle. These calculations are the starting point for our construction of the top degree map. In particular, we establish the formula 
\begin{equation} \label{AtiyahsFormula}
  \det\nolimits_\zeta(\nabla_{\dot{\gamma}}) = \str\bigl([\gamma\|_1^0]^\Sigma)^{2},
\end{equation}
which,  as far was we know, was first given by Atiyah \cite{Atiyah}.

\paragraph{The parallel transport in the spinor bundle.} Let  $P$ be a spin structure on $\gamma^*TX \rightarrow \T$ with associated  real spinor bundle $\Sigma = P \times_{\Spin_n} \Cl_n$, which always exists since $M$ and hence $\gamma^*TX$ is oriented. Since the covering $\Spin_n \rightarrow \mathrm{SO}_n$ is discrete, the Levi-Civita connection on $\gamma^*TX$ (obtained by pullback from the Levi-Civita connection on $TX$) lifts in a unique way to a connection on $\Sigma$. Let $[\gamma\|_1^0]^\Sigma \in \End_{\Cl_n}(\Sigma_{\gamma(0)})$ by the corresponding parallel transport around $\T$. It will be important for our considerations to calculate the parallel transport in the spinor bundle in terms of the parallel transport $[\gamma\|_1^0]^{TX}$ in the tangent bundle. Since $[\gamma\|_1^0]^{TX}$ is an orthogonal endomorphism of $T_{\gamma(0)} X$, there is an orthonormal basis $e_1, \dots, e_n$ of $T_{\gamma(0)}X$ such that the parallel transport is given by the matrix
\begin{equation} \label{MatrixOfParallelTransport}
  [\gamma\|_1^0]^{TX} ~\widehat{=} ~
  \begin{pmatrix}
    \cos(2\pi \alpha_1) & -\sin(2\pi \alpha_1) &  & & & & & \\
    \sin(2\pi \alpha_1) & \cos(2\pi \alpha_1) &  & & & & & \\
    & & \ddots & & & & & \\
    & & & \cos(2\pi \alpha_{m}) & -\sin(2\pi \alpha_m) & &  & \\
    & & & \sin(2\pi \alpha_{m}) & \cos(2\pi \alpha_m) & &  & \\
    & & &  &  & 1 & &  \\    
    & & &  &  & & \ddots &  \\    
    & & & &   & & & 1  \\  
    \end{pmatrix}
\end{equation}
with respect to this basis, where $ \alpha_1, \dots, \alpha_m \notin \Z$ are real numbers. Using the formula $[\gamma\|_s^t]^{T^\prime X})^\prime = [\gamma\|_t^s]^{TX}$ for the parallel transport in the cotangent bundle, it is not hard to see that the parallel transport $[\gamma\|_1^0]^{T^\prime X}$ is given by the \emph{same} matrix \eqref{MatrixOfParallelTransport} with respect to the dual basis $e_1^\prime, \dots, e_n^\prime$. 
We now have the following lemma.

\begin{lemma} \label{LemmaSpinParallelTransport}
Let $e_1, \dots, e_n$ be an orthonormal basis of $T_{\gamma(0)} X$ with respect to which $[\gamma\|_1^0]^{TX}$ takes the form \eqref{MatrixOfParallelTransport} for numbers $\alpha_1, \dots, \alpha_m  \notin \Z$, and let $\cc_1, \dots, \cc_n$ be the corresponding generators of the Clifford algebra $\Cl(T_{\gamma(0)}X)$. Then the parallel transport around $\gamma$ in the spinor bundle $\Sigma$ is given by
  \begin{equation} \label{Eq:FormulaSpinorParallelTransport}
     [\gamma\|_1^0]^{\Sigma} = \epsilon_0 \prod_{j=1}^m \bigl(\cos(\pi \alpha_j) + \sin(\pi \alpha_j) \cc_{2j-1} \cc_{2j} \bigr),
  \end{equation}
  for some $\epsilon_0 \in \{\pm 1\}$. Moreover, when the spin structure $P$ is is replaced to $-P$, then $\epsilon_0$ changes its sign.
\end{lemma}

\begin{proof}
Remember that $\Sigma = P \times_{\Spin_n} \Cl_n$. Hence with respect to a frame $p \in P_{\gamma(0)}$, the parallel transport is given in terms of an element $g \in \Spin_n$, such that
\begin{equation*}
  [\gamma\|_1^0]^\Sigma [p, \psi] = [p, g \cdot \psi] \qquad \text{for all}~~\psi \in \Cl_n.
\end{equation*}
If $\rho_P: P \rightarrow \mathrm{Fr}(\gamma^*TX)$ is the projection, then the parallel transport in the tangent bundle  is given by an analogous formula
\begin{equation*}
  [\gamma\|_1^0]^{TX} [\rho_P(p), v] = [\rho_P(p), Q v],
\end{equation*}
for some $Q \in \mathrm{SO}_n$. In fact, due to the compatibility of the connections, we have $Q = \rho(g)$, where $\rho: \Spin_n \rightarrow \mathrm{SO}_n$ is the standard covering. 

We therefore need to show that if $\rho_P(p)$ is such that $Q$ has the form \eqref{MatrixOfParallelTransport}, then any  preimage under $\rho$ in $\Spin_n$ is given by the formula \eqref{Eq:FormulaSpinorParallelTransport}. This follows from verifying that if one conjugates a vector $\cc(v) = v_1 \cc_{2j-1} + v_2 \cc_{2j}$ in the Clifford algebra $\Cl(T_{\gamma(0)}X)$ by the element $\pm(\cos(\pi \alpha_j) + \sin(\pi \alpha_j) \cc_{2j-1}\cc_{2j})$ to obtain $w_1 \cc_{2j-1} + w_2 \cc_{2j} = \cc(w) \in \Cl(T_{\gamma(0)}X)$, then $w$ is given by
\begin{equation*}
  \begin{pmatrix}w_1 \\ w_2 \end{pmatrix} ~\widehat{=}~ \begin{pmatrix} \cos(2\pi \alpha_j) & - \sin(2 \pi \alpha_j) \\ \sin(2 \pi \alpha_j) & \cos(2 \pi \alpha_j) \end{pmatrix} 
  \begin{pmatrix}v_1 \\ v_2 \end{pmatrix}.
\end{equation*}
We have $-P = (P \times M)/\Z_2$, where $M$ is the M\"obius bundle, and we divide out the diagonal action of $\Z_2$. Since parallel transport around $\T$ in $M$ adds a sign, this adds a sign in formula \eqref{Eq:FormulaSpinorParallelTransport}.
\end{proof}

Since right multiplication by $\Cl_n$ is parallel, it commutes with $[\gamma\|_1^0]^{\Sigma}$; this also follows from the explicit formula \eqref{Eq:FormulaSpinorParallelTransport}.  Combining \eqref{Eq:FormulaSpinorParallelTransport} with \eqref{DefinitionSupertrace}, we obtain the following immediate consequence.

\begin{corollary} \label{CorollarySupertrace}
If $m = n/2$, then
\begin{equation} \label{FormulaSupertrace}
  \str\bigl([\gamma\|_1^0]^{\Sigma}\bigr) =  \epsilon_0 \cdot \mathrm{sign}(e_1, \dots, e_n) \cdot  \prod_{j=1}^{n/2} 2\sin(\pi \alpha_j),
\end{equation}
where $\mathrm{sign}(e_1, \dots, e_n) = \pm 1$, depending on whether the basis $e_1, \dots, e_n$ with respect to which $[\gamma\|_1^0]^{TX}$ is given by \eqref{MatrixOfParallelTransport} is positively oriented or not. If $m < n/2$, in particular if $n$ is odd, then the supertrace vanishes.
\end{corollary}

\paragraph{The determinant of the covariant derivative.} As mentioned above, for $\gamma \in \L X$, the Riemannian structure of $X$ induces a connection on the pullback $\gamma^* TX$ of the tangent bundle over $\T$. We denote by $\nabla_{\dot{\gamma}}$ the operator acting on sections of $\gamma^*T X$ by differentiating in direction of the canonical vector field $\partial_t$ on $\T$ using this pullback connection.
Integrating by parts, we obtain that
\begin{equation} \label{SkewSymmetriyOmega}
  \langle\nabla_{\dot{\gamma}} V, W\rangle_{L^2} = \int_{\T} \langle \nabla_{\dot{\gamma}} V(t), W(t)\rangle \dd t = - \int_{\T} \langle  V(t), \nabla_{\dot{\gamma}}W(t)\rangle \dd t = - \langle V, \nabla_{\dot{\gamma}} W\rangle_{L^2}
\end{equation}
for vector fields $V, W \in T_\gamma \L X$  $= C^\infty(\T, \gamma^\ast TX)$, hence $\nabla_{\dot{\gamma}}$ is skew-symmetric on this domain. In fact, considered as an unbounded operator on $L^2(\T, \gamma^* TX)$ with domain $C^\infty(\T, \gamma^* TX)$, it is {\em essentially skew-adjoint}, meaning that it has a unique closed skew-adjoint extension; the domain of this extension is the Sobolev space $H^1(\T, \gamma^*TX)$, the space of absolutely continuous vector fields along $\gamma$ with derivative contained in $L^2$. 

The {\em zeta function} of $\nabla_{\dot{\gamma}}$ is the function 
\begin{equation} \label{ZetaOfNabla}
  \zeta_{\gamma}(s) \defeq \sum_{\lambda \neq 0} \lambda^{-s} = 2\sum_{\lambda \neq 0} \cos\left(\frac{\pi s}{2}\right) |\lambda|^{-s},
\end{equation}
where the sum goes over all non-zero eigenvalues of $\nabla_{\dot{\gamma}}$ (as an operator on the complexification $L^2(\T, \gamma^*TX)\otimes \C$). To define powers of complex numbers, we use a spectral cut at the negative real axis; the second equality in \eqref{ZetaOfNabla} then uses that the non-zero eigenvalues of $\nabla_{\dot{\gamma}}$ are purely imaginary and come in complex conjugate pairs since $\nabla_{\dot{\gamma}}$ is real, i.e., commutes with complex conjugation. The sum defines a holomorphic function for $\mathrm{Re}(s)>1$. This function has a meromorphic extension to all of $\C$, which is regular at zero. The (reduced) zeta determinant of $\nabla_{\dot{\gamma}}$ is then defined by
\begin{equation*}
  \det\nolimits_\zeta^\prime(\nabla_{\dot{\gamma}}) = e^{-\zeta_\gamma^\prime(0)},
\end{equation*}
motivated by the fact that the right hand side is formally the product of the non-zero eigenvalues, if one pretends for a moment that there are only finitely many eigenvalues. The zeta determinant itself is defined by
\begin{equation*}
\det\nolimits_\zeta(\nabla_{\dot{\gamma}}) = \begin{cases} \det\nolimits_\zeta^\prime(\nabla_{\dot{\gamma}}) & 0 \notin \text{spectrum of }~\nabla_{\dot{\gamma}}\\ 0 & 0 \in \text{spectrum of }~\nabla_{\dot{\gamma}}.\end{cases}
\end{equation*}
We now have the following proposition, which can be found e.g., in \cite[Lemma~2]{Atiyah}, \cite[Eq.~(2.13)]{Bismut1} and \cite[pp.~125ff]{PratWaldron}. Together with Corollary~\ref{CorollarySupertrace}, this yields Atiyah's formula \eqref{AtiyahsFormula}.

\begin{proposition} \label{Prop:ZetaPfaffian}
Let $e_1, \dots, e_n$ be an orthonormal basis of $T_{\gamma(0)}X$ with respect to which $[\gamma\|_1^0]^{TX}$ is given by the formula \eqref{MatrixOfParallelTransport}. Then the reduced zeta determinant of $\nabla_{\dot{\gamma}}$ is given by the formula
  \begin{equation} \label{Eq:ExplicitZetaPfaffian}
    \det\nolimits_\zeta^\prime(\nabla_{\dot{\gamma}}) = \prod_{j=1}^m 4 \sin(\pi \alpha_j)^2.
  \end{equation}
\end{proposition}

\begin{remark}
Note that the unreduced zeta determinant $\det\nolimits_\zeta(\nabla_{\dot{\gamma}})$ is always zero if $n$ is odd, because in odd dimensions, there is always a parallel vector field around any loop.
\end{remark}

\section{The Pfaffian and the Spin Line Bundle} \label{SectionLineBundles}

In this section, we introduce two real line bundles on the loop space of an oriented Riemannian manifold $X$: The Pfaffian line bundle $\Pf$ associated to $\nabla_{\dot{\gamma}}$, and the line bundle $\mathcal{L} = \hat{\L} X \times_{\Z_2} \R$ \NZ{use widehat ?} associated to the $\Z_2$-principal bundle which is the transgression of the spin lifting gerbe on $X$ (defined in \cite{Murrey} or \cite{Waldorf1}; see also \cite{StolzTeichner}).  We then construct a canonical isomorphism between these two bundles which also preserves the natural metrics on the two.
This was previously done by Prat-Waldron in his thesis \cite{PratWaldron}. Since we will use a slightly different description of $\mathcal{L}$, our definition of $\Phi$ differs slightly from his.

{\paragraph{The Pfaffian line bundle.} We start by defining the Pfaffian line bundle\footnote{For further details, see e.g., \cite[\S4.3]{PratWaldron}}. To this end, for $a>0$, define the open sets
\begin{equation*}
 U^{(a)} \defeq \{\gamma \in \L X \mid 4 \pi^2 a^2 \notin \mathrm{spec}(-\nabla_{\dot{\gamma}}^2)\}
\end{equation*}
of $\L X$. These form an open cover of $\L X$. On the open sets $U^{(a)}$ respectively the intersections $U^{(a)} \cap U^{(b)}$, we define
\begin{equation} \label{DefEBundles}
  \mathcal{E}^{(a)} \defeq \bigoplus_{\lambda < a} \mathrm{Eig}(-\nabla_{\dot{\gamma}}^2, 4 \pi^2 \lambda^2)^\prime \quad \text{respectively} \quad \mathcal{E}^{(a, b)} \defeq \bigoplus_{a <\lambda < b} \mathrm{Eig}(-\nabla_{\dot{\gamma}}^2, 4 \pi^2 \lambda^2)^\prime,
\end{equation}
the direct sum of the duals of the eigenspaces of the Laplacian $-\nabla_{\dot{\gamma}}^2$ corresponding to the respective intervals.
These are smooth vector bundles of finite rank on $U^{(a)}$ respectively $U^{(a)}\cap U^{(b)}$. Note that the operators $(\nabla^\prime_{\dot{\gamma}})^2$ and $\nabla_{\dot{\gamma}}^2$ have the same eigenvalues, where $\nabla^\prime_{\dot{\gamma}}$ denotes the covariant derivative on the dual bundle $\gamma^*T^\prime X$. Using the metric, we therefore can therefore identify 
$\mathrm{Eig}(-\nabla_{\dot{\gamma}}^2, 4 \pi^2 \lambda^2)^\prime \cong \mathrm{Eig}(-(\nabla_{\dot{\gamma}}^\prime)^2, 4 \pi^2 \lambda^2)$, so that we get canonical isomorphisms
\begin{equation*}
\mathcal{E}^{(a)} \oplus \mathcal{E}^{(a, b)} = \mathcal{E}^{(b)}.
\end{equation*}
 The associated determinant line bundles $\Lambda^{\mathrm{top}}\mathcal{E}^{(a)}$ are then real line bundles on the open sets $U^{(a)}$ and satisfy $\Lambda^{\mathrm{top}}\mathcal{E}^{(a)} \otimes \Lambda^{\mathrm{top}}\mathcal{E}^{(a, b)} = \Lambda^{\mathrm{top}}\mathcal{E}^{(b)}$.

We now discuss how these line bundles glue together to a line bundle $\Pf$ on $\L X$. Because $\nabla_{\dot{\gamma}}$ is an invertible, skew-adjoint operator $\mathcal{E}^{(a, b)}$ for all $0 < a < b$, each of these spaces has even dimension. Moreover, the Pfaffian $\pf(\nabla_{\dot{\gamma}}^{(a, b)})$ of $\nabla_{\dot{\gamma}}$  restricted to $\mathcal{E}^{(a, b)}$ (as defined in \S\ref{SectionTopDegree}) is a well-defined non-zero element of $\Lambda^{\top}\mathcal{E}^{(a, b)}$ and these elements satisfy
\begin{equation} \label{CocycleCondition}
  \pf(\nabla_{\dot{\gamma}}^{(a, b)}) \otimes \pf(\nabla_{\dot{\gamma}}^{(b, c)}) = \pf(\nabla_{\dot{\gamma}}^{(a, c)})
\end{equation}
for $a < b < c$. We remark that $\pf(\nabla_{\dot{\gamma}}^{(a, b)})$ is the $\frac{\dim(\mathcal{E}^{(a, b)})}{2}$-th power of the canonical two form $\omega$ (defined in \eqref{CanonicalTwoForm}) restricted to $\mathcal{E}^{(a, b)}$. In particular, we obtain isomorphisms
\begin{equation*}
\begin{aligned}
  h^{(a, b)}: \Lambda^{\top} \mathcal{E}^{(a)} &\longrightarrow \Lambda^{\top} \mathcal{E}^{(a)} \otimes \Lambda^{\top} \mathcal{E}^{(a, b)} = \Lambda^{\top} \mathcal{E}^{(b)}, ~~~~~~~~\beta \longmapsto \beta \otimes \pf(\nabla_{\dot{\gamma}}^{(a, b)})
\end{aligned}
\end{equation*}
on the overlaps $U^{(a)} \cap U^{(b)}$
By \eqref{CocycleCondition}, these satisfy the cocycle condition $h^{(b, c)} \circ h^{(a, b)} = h^{(a, c)}$ on triple intersections $U^{(a)} \cap U^{(b)} \cap U^{(c)}$ so that the line bundles $\Lambda^{\top} \mathcal{E}^{(a)}$ together with the glueing isomorphisms $h^{(a, b)}$ indeed define a real line bundle $\Pf$ over $\L X$. 

The line bundle $\Pf$ has a natural metric. Notice that the vector bundles $\mathcal{E}^{(a)}$ and $\mathcal{E}^{(a, b)}$ are naturally equipped with the $L^2$ metric, which also induces a metric on $\Lambda^{\top}\mathcal{E}^{(a)}$. However, these metrics are not compatible with the glueing isomorphisms $h^{(a, b)}$. Instead, over $U^{(a)}$, we consider the metric on $\Lambda^{\top}\mathcal{E}^{(a)}$ given by\footnote{Here we assume $\dim \mathcal{E}^{(a)} = N$.}
\begin{equation} \label{MetricOnPf}
   \bigl\langle \theta_1 \wedge \dots \wedge \theta_N, \tilde{\theta}_1 \wedge \dots \wedge \tilde{\theta}_N\bigr\rangle_{\Pf} \defeq \det \Bigl( \langle\theta_a, \tilde{\theta}_b\rangle_{L^2}\Bigr)_{1 \leq a, b \leq N} \det\nolimits_\zeta\bigl(\nabla_{\dot{\gamma}}^{(a, \infty)}\bigr),
\end{equation}
for one forms $\theta_a, \tilde{\theta}_a \in \mathcal{E}^{(a)} \subset L^2(\T, \gamma^*T^\prime X)$. This is the metric on $\Lambda^{\top}\mathcal{E}^{(a)}$ induced by the $L^2$ metric on $\mathcal{E}^{(a)}$, but modified by the factor
\begin{equation*}
\det\nolimits_\zeta\bigl(\nabla_{\dot{\gamma}}^{(a, \infty)}\bigr) \defeq \frac{\det\nolimits_\zeta^\prime\bigl(\nabla_{\dot{\gamma}}\bigr)}{\det(\nabla_{\dot{\gamma}}^{(0, a)})}.
\end{equation*}
That this metric is well defined as a metric on $\Pf$, i.e., compatible with the glueing isomorphisms $h^{(a, b)}$, follows from the facts that we have $\|\pf(\nabla_{\dot{\gamma}}^{(a, b)})\|_{L^2}^2 = \det(\nabla_{\dot{\gamma}}^{(a, b)})$ and $\det(\nabla_{\dot{\gamma}}^{(0, a)})\det(\nabla_{\dot{\gamma}}^{(a, b)}) = \det(\nabla_{\dot{\gamma}}^{(0, b)})$. Since $\Pf$ is a real line bundle, there is a unique connection  compatible with this metric, so we have completed the description of $\Pf$ as a geometric real line bundle.

There is a canonical section $\pf(\nabla_{\dot{\gamma}})$,  of this bundle. Over $U^{(a)}$, it is given by the section 
\begin{equation} \label{DefinitionPfaffian}
\pf(\nabla_{\dot{\gamma}})^{(a)} \defeq \pf(\nabla_{\dot{\gamma}}^{(a)})
\end{equation} 
of $\Lambda^{\top}\mathcal{E}^{(a)}$ where $\nabla_{\dot{\gamma}}^{(a)}$ is the restriction of $\nabla_{\dot{\gamma}}$ to $\mathcal{E}^{(a)}$. By the multiplication rule for the Pfaffian, we have $h^{(a, b)} \pf(\nabla_{\dot{\gamma}})^{(a)} = \pf(\nabla_{\dot{\gamma}})^{(b)}$ so that the $\pf(\nabla_{\dot{\gamma}})^{(a)}$ glue together to a section of $\Pf$. This section $\pf(\nabla_{\dot{\gamma}})$ satisfies
\begin{equation} \label{NormOfPfaffianSection}
  \|\pf(\nabla_{\dot{\gamma}})\|_{\Pf}^2 = \det\nolimits_\zeta(\nabla_{\dot{\gamma}}).
\end{equation}

Given $\gamma \in \L X$, we can also define the {\em reduced Pfaffian} $\pf^\prime(\nabla_{\dot{\gamma}})$ by requiring that over $U^{(a)}$, it is given by
\begin{equation} \label{DefReducedPfaffian}
\pf^\prime(\nabla_{\dot{\gamma}})^{(a)} \defeq \pf(\nabla_{\dot{\gamma}}^{(0, a)})  \in \Lambda^{\mathrm{top}} \mathcal{E}^{(0, a)}.
\end{equation}
for $a$ such that $4 \pi^2 a^2 \notin \mathrm{spec}(-\nabla_{\dot{\gamma}}^2)$. By varying $a$, this glues together to an element of the line $\Pf_\gamma \otimes (\Lambda^{\mathrm{top}}\ker(\nabla_{\dot{\gamma}}))$.

{\paragraph{The spin line bundle.}} The bundle $\hat{\L} X$ is easier to describe: Over a path $\gamma$, the fiber is the set of isomorphism classes of spin structures on $\gamma^* TX$, i.e., $\Spin_n$-principal bundles $P \rightarrow \T$ that are compatible with the frame bundle of $\gamma^* TX$ (see \cite[Ch.~2, \S 1]{LawsonMichelsohn}). Since $X$ is oriented, $\gamma^* TX$ is always trivial, so by Corollary~1.5 in \cite{LawsonMichelsohn}, there are exactly two isomorphism classes of spin structures for each path $\gamma$. Letting $\Z_2$ act by fiberwise exchanging the spin structures, this turns $\hat{\L} X$ into a $\Z_2$ principal bundle. Clearly, a spin structure $P^{\Spin}$ on $X$ defines a section $s$ of $\hat{\L} X$, by defining $s(\gamma) = \gamma^*P^{\Spin}$, and $\hat{\L} X$ is trivial if and only if $X$ is spin.  

The smooth structure of $\hat{\L}X$ is defined as follows. Given a loop $\gamma \in \L X$, we can always find a tubular neighborhood $U$ of its image, i.e., a neighborhood diffeomorphic to $\T \times \R^{n-1}$ (this uses that $X$ is orientable). The set $\mathcal{U}$ of loops with image contained in $U$ is then an open neighborhood of $\gamma$ in $\L X$. On the other hand, there exists a spin structure $P$ on $TX|_U \cong T(\T \times \R^{n-1})$, and the smooth structure is fixed by the requirement that the section $s$ over $\mathcal{U}$ with $s(\gamma) = \gamma^* P$ be smooth.

\begin{remark}
A word of warning: While a spin structure gives a trivialization of $\hat{\L} X$, it is {\em not} true that any non-vanishing section of $\hat{\L} X$ defines a spin structure on $X$. Instead, in order to obtain a spin structure this way, the section has to satisfy the additional condition of being compatible with the {\em fusion product} on $\L X$ (compare \cite{StolzTeichner}).
\end{remark}

We can now define the line bundle $\mathcal{L} = \hat{\L} X \times_{\Z_2} \R$ by performing the associated bundle construction via the non-trivial action of $\Z_2$ on $\R$. Hence elements of this bundle are equivalence classes $[P, \lambda]$, where $\lambda \in \R$ and $P$ is (an equivalence class of) a spin structure on $\gamma^* TX$; the equivalence relation then identifies $(P, \lambda) \sim (-P, - \lambda)$, where $-P$ denotes the opposite spin structure. A metric on $\mathcal{L}$ is defined by setting
\begin{equation*}
 \bigl\|[P, \lambda]\bigr\|_{\mathcal{L}} \defeq |\lambda|
\end{equation*}
for $P \in \hat{\L} X$ and $\lambda \in \R$.

There is a canonical section $q_0$ of $\mathcal{L}$, constructed as follows. After choosing a spin structure $P$ for $\gamma^*TX$, we can form the vector bundle $\Sigma = \Sigma_P$ as in \eqref{AssociatedSpinorBundle}. As discussed in \S\ref{SectionAtiyahsFormula}, one obtains a connection on $\Sigma$, and a corresponding parallel transport $[\gamma\|_0^q]^{\Sigma_P}$ around $\gamma$. The desired section is now given by 
\begin{equation*}
q_0(\gamma) \defeq \str\bigl( [\gamma\|_1^0]^\Sigma\bigr) \defeq \bigl[P, \str \bigl( [\gamma\|_1^0]^{\Sigma_P}\bigr)\bigr].
\end{equation*}
This does not depend on the choice of $P$: If one changes $P$ to $-P$, then by Lemma~\ref{LemmaSpinParallelTransport}, $\str([\gamma\|_1^0]^\Sigma)$ changes its sign, so $q_0$ is a well-defined section $\mathcal{L}$. It is smooth since $[\gamma\|_1^0]^\Sigma$ depends smoothly on $\gamma$, having fixed a spin structure in a tubular neighborhood of the image of $\gamma$ in $X$ (compare the definition of the smooth structure above).
Notice that $q_0$ vanishes if $n$ is odd, since then the super trace is an odd functional, while $[\gamma\|_1^0]^\Sigma$ is always an even endomorphism.

{\paragraph{The canonical isomorphism.}} We will now show that the bundles $\Pf$ and $\mathcal{L}$ are isomorphic as geometric line bundles, by constructing an explicit isomorphism $\Phi: \mathcal{L} \rightarrow \Pf$, defined in \eqref{DefinitionPhiGamma} below. 

We start with some preparations. For a fixed loop $\gamma$, let $e_1, \dots, e_n$ be an orthonormal basis of $T_{\gamma(0)}X$ such that that the parallel transport $[\gamma\|_1^0]^{TX}$ takes the form \eqref{MatrixOfParallelTransport} with respect to this basis, for numbers $\alpha_1, \dots, \alpha_m \notin \Z$. Let $e_1^\prime, \dots, e_n^\prime \in T_{\gamma(0)}^\prime X$ be the dual basis and let 
\begin{equation} \label{ParallelTranslatesEj324}
E_j(t) = [\gamma\|_1^t]^{TX}e_j, \qquad \text{respectively} \qquad E_j^\prime(t) = [\gamma\|_1^t]^{T^\prime X} e_j^\prime
\end{equation}
be the corresponding parallel translates. The one forms $E^\prime_{2m+1}, \dots, E_n^\prime$ then form an orthonormal basis of $\ker(\nabla_{\dot{\gamma}}^\prime)$.
For $a >0$ such that $\gamma \in U^{(a)}$, define $\Theta_\gamma^{(a)} \in \Lambda^{\mathrm{top}}\mathcal{E}_\gamma^{(a)} = \Lambda^{\mathrm{top}}\ker(\nabla_{\dot{\gamma}}) \otimes \Lambda^{\mathrm{top}} \mathcal{E}^{(0, a)}$ by
\begin{equation} \label{DefinitionTheta}
\Theta_\gamma^{(a)} ~\defeq~ \prod_{j=1}^m \frac{-1}{2  \sin(\pi\alpha_j)} E^\prime_{2m+1} \wedge \dots \wedge E^\prime_{n} \wedge \pf(\nabla_{\dot{\gamma}}^{(0, a)}).
\end{equation}
For $b>a$ such that $\gamma \in U^{(b)}$, we have $h^{(a, b)}\Theta_\gamma^{(a)} = \Theta_\gamma^{(b)}$, so the $\Theta_\gamma^{(a)}$ glue together to an element $\Theta_\gamma \in \Pf_\gamma$. This element depends on the choice of basis $e_1, \dots, e_n$ and the corresponding choice of numbers $\alpha_1, \dots, \alpha_m \notin \Z$. By \eqref{NormOfPfaffianSection} and Prop.~\ref{Prop:ZetaPfaffian}, we always have 
\begin{equation} \label{NormOfTheta}
\|\Theta_\gamma\|_{\Pf}^2 = \prod_{j=1}^m \frac{1}{4 \sin(\pi\alpha_j)^2} \cdot \underbrace{\|\pf(\nabla_{\dot{\gamma}}\|_{\Pf}^2}_{=\det^\prime_\zeta(\nabla_{\dot{\gamma}})} = 1,
\end{equation}
so that the elements $\Theta_\gamma$ obtained this way differ at most by a sign.

\begin{definition}[The isomorphism]
Now fixing an orthonormal basis $e_1, \dots, e_n$ and numbers $\alpha_1, \dots \alpha_m \notin \Z$ as before, we define an isometric isomorphism $\Phi_\gamma: \mathcal{L}_\gamma \longrightarrow \Pf_\gamma$ as follows. Given a spin structure $P$ on $\gamma^* TX$ with associated spinor bundle $\Sigma_P$, let $\epsilon_0$ be determined by formula \eqref{Eq:FormulaSpinorParallelTransport}. Then define
\begin{equation} \label{DefinitionPhiGamma}
  \Phi_\gamma([P, \lambda]) \defeq  \lambda \cdot \epsilon_0 \cdot \mathrm{sign}(e_1, \dots, e_n) \cdot \Theta_\gamma,
\end{equation}
where $\mathrm{sign}(e_1, \dots, e_n) = \pm 1$, depending on whether the basis is positively or negatively oriented and $\Theta \in \Pf_\gamma$ is defined by \eqref{DefinitionTheta}. The definition of $\Phi_\gamma$ is independent from the choice of the representatives of $[P, \lambda]$, because $\epsilon_0$ changes to $- \epsilon_0$ if $P$ changes to $-P$. 
\end{definition}

\begin{theorem}
  For each $\gamma$, the isomorphism $\Phi_\gamma$ defined above is independent of the choice of $e_1, \dots, e_n$ and $\alpha_1, \dots, \alpha_m  \notin \Z$. Moreover, the $\Phi_\gamma$ assemble to a smooth isometric isomorphism of line bundles $\Phi: \mathcal{L} \rightarrow \Pf$.
\end{theorem}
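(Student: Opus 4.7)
The plan is to prove the two assertions together by a local strategy: first verify pointwise that the signed product $\epsilon_0 \cdot \sign(e_1,\ldots,e_n) \cdot \Theta_\gamma$ of \eqref{DefinitionPhiGamma} is insensitive to the residual freedom in the auxiliary data $(e_1,\ldots,e_n;\alpha_1,\ldots,\alpha_m)$, and then use smoothly varying local choices of such data to upgrade $\Phi$ to a smooth bundle map.

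The first and main step is the invariance check. I will decompose the change-of-data group into elementary moves preserving the normal form \eqref{MatrixOfParallelTransport}: \emph{(i)} integer shifts $\alpha_j \mapsto \alpha_j + k_j$; \emph{(ii)} $\mathrm{SO}(2)$ rotations within a block $\mathrm{span}(e_{2j-1},e_{2j})$; \emph{(iii)} the swap $e_{2j-1}\leftrightarrow e_{2j}$ (which sends $\alpha_j \mapsto -\alpha_j$); \emph{(iv)} permutations of blocks; and \emph{(v)} orthogonal transformations $T \in O(n-2m)$ of the kernel span $\mathrm{span}(e_{2m+1},\ldots,e_n)$. For each move, I will track how the three factors $\epsilon_0$, $\sign(e_1,\ldots,e_n)$, and $\Theta_\gamma$ transform and verify cancellation. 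The crucial computations are: under \emph{(i)}, the product $\prod_j (2\sin(\pi\alpha_j))^{-1}$ in $\Theta_\gamma$ and the scalar $\epsilon_0$ both acquire $(-1)^{\sum k_j}$ by \eqref{Eq:FormulaSpinorParallelTransport}; under \emph{(iii)}, the orientation and the sign of $\sin(\pi\alpha_j)$ each flip while $\epsilon_0$ is preserved, because the identity $\tilde e_{2j-1}\tilde e_{2j} = -e_{2j-1}e_{2j}$ exactly compensates $\sin(-\pi\alpha_j) = -\sin(\pi\alpha_j)$ in each Clifford factor of \eqref{Eq:FormulaSpinorParallelTransport}; and under \emph{(v)}, both $E_{2m+1}\wedge\cdots\wedge E_n$ in $\Theta_\gamma$ and $\sign(e_1,\ldots,e_n)$ scale by $\det T$, while $\epsilon_0$ is unaffected because \eqref{Eq:FormulaSpinorParallelTransport} only involves the first $2m$ basis vectors. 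Moves \emph{(ii)} and \emph{(iv)} are straightforward since the factors of \eqref{Eq:FormulaSpinorParallelTransport} commute and the ingredients of $\Theta_\gamma$ are symmetric. In the degenerate cases (coinciding $\alpha_j$'s, or $\alpha_j = 1/2$), the stabilizer of \eqref{MatrixOfParallelTransport} is larger than the group generated by \emph{(i)}--\emph{(v)}, but invariance follows by approximating $\gamma$ in the generic locus and invoking continuity of the ingredients in the spectral data.

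For the smoothness claim, I will argue locally over a neighborhood $U \subset U^{(a)}$ of any $\gamma_0 \in \L X$. On the open dense subset of $U$ where the nontrivial rotation angles of $[\gamma\|_0^1]^{TX}$ are simple, perturbation theory for the smooth finite-rank bundle $\mathcal{E}^{(a)}$ yields smoothly varying adapted frames and numbers $\alpha_j$; each ingredient of \eqref{DefinitionPhiGamma} is then smooth, with $\epsilon_0 \in \{\pm 1\}$ locally constant. Pointwise invariance then shows that $\Phi$ is a well-defined smooth section of $\mathcal{L}^* \otimes \Pf$ on this dense subset, and continuity of the spectral data in $\gamma$ extends smoothness across the degenerate locus.

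The main obstacle will be the sign bookkeeping in move \emph{(iii)}, where orientation, $\sin(\pi\alpha_j)$, and the Clifford element $e_{2j-1}e_{2j}$ all transform nontrivially, so that cancellation depends sensitively on the precise form of \eqref{Eq:FormulaSpinorParallelTransport}. A secondary difficulty is the smoothness across eigenvalue crossings: at such loci the adapted frame cannot be chosen smoothly, and one must argue that the intrinsically defined $\Phi_\gamma$ nevertheless extends smoothly. This reduces to the fact that $\mathcal{L}$ and $\Pf$ are smooth line bundles and $\Phi$, once shown to be independent of the auxiliary data, is manifestly continuous as a function of the smoothly varying operator $\nabla_{\dot\gamma}$ alone.
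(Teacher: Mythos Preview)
Your overall strategy matches the paper's: decompose the freedom in the adapted data into elementary moves, verify the sign cancellations move by move, and then argue continuity/smoothness. Moves (i), (iii), (iv), (v) correspond exactly to the paper's cases (1)--(3) and the $\alpha=0$ block, and your sign analyses there are correct.

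However, your handling of the degenerate stabilizer (coinciding $\alpha_j$'s, or $\alpha_j=1/2$) by ``approximating $\gamma$ in the generic locus'' has a genuine gap. Fix $\gamma$ with, say, $\alpha_1=\alpha_2=\alpha\notin\{0,1/2\}$, and two adapted frames $e,\tilde e$ related by some $Q\in U(2)$ not generated by your moves. For a generic perturbation $\gamma_t$ the $4$-dimensional eigenspace splits into two $2$-planes determined by $\gamma_t$, not by your choice of frame; there is a single such splitting, and it need not converge as $t\to 0$ to either the splitting induced by $e$ or that induced by $\tilde e$. Hence you cannot in general extend both $e$ and $\tilde e$ to adapted frames along the same approximating family, so the limit argument does not compare $\Phi_\gamma^{(e)}$ with $\Phi_\gamma^{(\tilde e)}$. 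The paper avoids this by analyzing the stabilizer at $\gamma$ directly: for $\alpha\notin\{0,1/2\}$ one has $Q_\alpha\in U(k_\alpha)\subset SO(2k_\alpha)$, and the Clifford product in \eqref{Eq:FormulaSpinorParallelTransport} is rewritten as $\exp(\pi\alpha\,\cc(\Omega))$ with $\Omega$ the $2$-form of the complex structure $J_\alpha$, which $Q_\alpha$ preserves; for $\alpha=1/2$ the product is the volume element of the $(-1)$-eigenspace, whose transformation under $O(2k_{1/2})$ is $\det(Q_{1/2})$. An alternative fix in your spirit: observe that for fixed $\gamma$ the assignment $e\mapsto \Phi_\gamma^{(e)}$ lands in the two-element set of isometries $\mathcal{L}_\gamma\to\Pf_\gamma$ and is continuous in the frame, hence constant on connected components of the stabilizer; since $U(k_\alpha)$ is connected and your moves (iii)+(i) and (v) reach the non-identity components of $O(2k_{1/2})$ and $O(d)$, this suffices. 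Either way, the argument must take place at $\gamma$, not at nearby generic loops.

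A secondary point: for smoothness, the nontrivial issue is not eigenvalue crossings among the $\alpha_j$ but rather the limit $\alpha_j\to\Z$, where $m$ drops and the factor $(2\sin\pi\alpha_j)^{-1}$ in $\Theta_\gamma$ diverges. Your claim that $\Phi$ is ``manifestly continuous'' once well-defined skips this; the paper checks it using the explicit form of $\Theta_\gamma^{(a)}$ in terms of the basis \eqref{ScewBasis}, where the cancellation against $\pf(\nabla_{\dot\gamma}^{(0,a)})$ becomes visible.
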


\begin{proof}
Let $\tilde{e}_1, \dots, \tilde{e}_n$ be a different orthonormal basis of $T_{\gamma(0)} X$, with respect to which $[\gamma\|_1^0]^{TX}$ takes the form \eqref{MatrixOfParallelTransport}, with possibly different numbers $\tilde{\alpha}_1, \dots, \tilde{\alpha}_m  \notin \Z$. Denote the corresponding quantities appearing in formula \eqref{DefinitionPhiGamma} by $\tilde{\epsilon}_0$ respectively $\tilde{\Theta}_\gamma$. 

We first investigate the following special cases.
\begin{enumerate}[(1)]
\item Suppose first we have only changed the $\alpha_j$, i.e., that $\tilde{e}_j = e_j$ for all $j$ and that $\tilde{\alpha}_j = \alpha_{j}+k_j$, for numbers $k_j \in \Z$. Then we have $\cos(\pi \tilde{\alpha}_j) = (-1)^{k_j} \cos(\pi \alpha_j)$ and $\sin(\pi \tilde{\alpha}_j) = (-1)^{k_j} \sin(\pi \alpha_j)$, hence
\begin{equation*}
\tilde{\epsilon}_0 = (-1)^{k_1 + \dots + k_m}\epsilon_0, ~~~~\text{but also}~~~~\tilde{\Theta}_\gamma = (-1)^{k_1 + \dots + k_m}\Theta_\gamma.
\end{equation*}
Hence $\Phi_\gamma$ is invariant with respect to this change. 

\item Now fix $j_0 \in \{1, \dots, m\}$ and suppose that $\tilde{e}_j = e_j$ for $j \notin\{ 2j_0-1, 2j_0\}$ and $\tilde{\alpha}_{j} = \alpha_{j}$ for $j \neq j_0$, while $\tilde{e}_{2j_0-1} = e_{2j_0}$, $\tilde{e}_{2j_0} = e_{2j_0-1}$ and $\tilde{\alpha}_{j_0} = -\alpha_{j_0}$. Then clearly 
\begin{equation*}
\mathrm{sign}(\tilde{e}_1, \dots, \tilde{e}_n) = -\mathrm{sign}(e_1, \dots,  e_n)
\end{equation*}
 and since $\sin(\pi \tilde{\alpha}_{j_0}) = -\sin(\pi \alpha_{j_0})$, this is compensated by $\tilde{\Theta}_\gamma = - \Theta_\gamma$. Furthermore, since by the Clifford multiplication rules $\cc_{2j_0} \cc_{2j_0-1} = - \cc_{2j_0-1} \cc_{2j_0}$, we have
\begin{equation*}
\begin{aligned}
\cos(\pi \tilde{\alpha}_{j_0}) + \sin(\pi \tilde{\alpha}_{j_0}) \tilde{\cc}_{2j_0-1} \tilde{\cc}_{2j_0}  &= \cos(\pi \alpha_{j_0}) - \sin(\pi \alpha_{j_0}) \cc_{2j_0} \cc_{2j_0-1}\\
&= \cos(\pi \alpha_{j_0}) + \sin(\pi \alpha_{j_0}) \cc_{2j_0-1} \cc_{2j_0},
\end{aligned}
\end{equation*}
hence $\tilde{\epsilon}_0 = \epsilon_0$. 
\item If there exists a permutation $\sigma \in S_m$ such that $\tilde{e}_{2j-1}=e_{2\sigma_j-1}$, $\tilde{e}_{2j} = {e}_{2\sigma_j}$, $\tilde{\alpha}_j = \alpha_{\sigma_j}$ for each $j=1, \dots, m$, as well as $\tilde{e}_j = e_j$ for all $j=2m+1, \dots, n$, then clearly this does not change $\Phi_\gamma$.
\end{enumerate}

Now generally, we have $\tilde{e}_j = Qe_j$ for all $j$, where $Q$ is some orthogonal automorphism of $T_{\gamma(0)} X$. By the preliminary considerations (1) and (2), we may assume that $\tilde{\alpha}_j = \alpha_j$ and furthermore that $\alpha_j \in (0, 1)$. Making this assumption, for $\alpha \in \{\alpha_1, \dots, \alpha_m\}$ we define subspaces
\begin{equation*}
  V_\alpha = \mathrm{span}\bigl\{ e_{2j-1}, e_{2j} \mid \alpha_j = \alpha \bigr\}  \subseteq T_{\gamma(0)} X.
\end{equation*}
and
\begin{equation*}
  V_0 = \ker\bigl([\gamma\|_1^0]^{TX} - \id\bigr).
\end{equation*}
We also set $n_\alpha = \dim(V_\alpha)$ for $\alpha \in \{0, \alpha_1, \dots, \alpha_m\}$. Then $Q$ preserves each of these subspaces $V_\alpha$ and therefore has block-diagonal form with blocks $Q_\alpha$ preserving the subspaces $V_\alpha$. Decomposing $Q$ as a product accordingly, we may now assume that all $Q_\alpha$ but one are the identity, in order to deal with each $\alpha$ separately. After possibly applying a permutation of the basis vectors, which is acceptable by (3) above, we may moreover assume that $V_\alpha$ is spanned by $e_1, \dots, e_{n_\alpha}$.

Making these simplifications, we now fix this $\alpha$ and first assume that $\alpha \notin \{0, 1/2\}$.  Then $Q_\alpha$ can be represented by a $n_\alpha \times n_\alpha$ matrix with respect to the basis, conjugation by which preserves the matrix
\begin{equation*}
R_\alpha ~~\hat{=}~~ \begin{pmatrix}
    \cos(2\pi \alpha) & -\sin(2\pi \alpha) &  & &  \\
    \sin(2\pi \alpha) & \cos(2\pi \alpha) &  & &  \\
    & & \ddots & &  \\
    & & & \cos(2\pi \alpha) & -\sin(2\pi \alpha)  \\
    & & & \sin(2\pi \alpha) & \cos(2\pi \alpha) \\
    \end{pmatrix},
\end{equation*}
i.e., $Q_\alpha$ satisfies $Q_\alpha R_\alpha Q_\alpha^* = R_\alpha$. Then necessarily $Q_\alpha \in \mathrm{U}(n_\alpha/2) \subset \mathrm{SO}(n_\alpha)$, because
\begin{equation*}
  J_\alpha \defeq \frac{R_\alpha - \cos(2\pi\alpha)\id}{\sin(2\pi\alpha)} ~~\hat{=}~~ \begin{pmatrix}
    0 & -1 &  & &  \\
    1 & 0 &  & &  \\
    & & \ddots & &  \\
    & & &0  & -1  \\
    & & & 1 & 0 \\
    \end{pmatrix}
\end{equation*}
is a complex structure on $V_\alpha$ that is preserved by conjugation with $Q_\alpha$. We now use this to show that $\tilde{\epsilon}_0 = \epsilon_0$. To this end, notice that because ${\cc}_{2j-1}{\cc}_{2j}$ and ${\cc}_{2i-1}{\cc}_{2i}$ commute in $\Cl T_{\gamma(0)} X$ for all $i, j$, we have
\begin{equation*}
  \prod_{j=1}^{k_\alpha} \bigl(\cos(\pi\alpha) + \sin(\pi\alpha) {\cc}_{2j-1}{\cc}_{2j}\bigr) = \exp \left( \pi \alpha \sum_{j=1}^{k_\alpha} {\cc}_{2j-1} {\cc}_{2j}\right) = \exp\bigl(\pi \alpha \,\cc(J_\alpha)\bigr), 
  \end{equation*}
where we identified the skew-symmetric endomorphism $J_\alpha \in \mathfrak{so}(V_\alpha)$ with an element in $\Lambda^2 V_\alpha \subset \Lambda^2 T^\prime_{\gamma(0)} X$ the usual way. Since $Q_\alpha$ preserves $J_\alpha$, this implies that $\tilde{\epsilon}_0 = \epsilon_0$.
Also, because in particular $Q_\alpha \in \mathrm{SO}(n_\alpha)$, we have $\mathrm{sign}(\tilde{e}_1, \dots, \tilde{e}_{n}) = \mathrm{sign}(e_1, \dots, e_n)$. Finally, the identity $\tilde{\Theta}_\gamma  = \Theta_\gamma$ is clear.

On the other hand, if $\alpha=0$ or $\alpha=1/2$, $Q_\alpha \in \mathrm{O}(n_\alpha)$ can be arbitrary. In the case $\alpha= 0$, it is clear that $\tilde{\epsilon}_0 = \epsilon_0$, while
\begin{equation*}
  \mathrm{sign}(\tilde{e}_1, \dots, \tilde{e}_n) = \det(Q_0) \cdot \mathrm{sign}({e}_1, \dots, {e}_n), ~~~~~~~ \tilde{\Theta}_\gamma = \det(Q_0) \cdot {\Theta}_\gamma,
\end{equation*}
where the latter sign comes from the induced permutation of $E_{2m+1}^\prime, \dots, E_n^\prime$. As $\det(Q_0)^2 = 1$, we conclude that $\tilde{\Phi}_\gamma = \Phi_\gamma$.

Finally, consider the case $\alpha = 1/2$. Then $n_\alpha = n_{1/2}$ is necessarily even, and
\begin{equation*}
  \prod_{j=1}^{\frac{n_{1/2}}{2}} \bigl(\cos(\pi /2) + \sin(\pi /2) \cc_{2j-1}\cc_{2j}\bigr) = \cc_1 \cc_2 \cdots \cc_{n_{1/2}} 
\end{equation*}
is the volume element of $\Cl(V_{1/2})$. Since the volume element of a Clifford algebra remains invariant under an orientation preserving orthogonal transformation while it changes sign if the transformation is  orientation reversing, this shows 
\begin{equation*}
\tilde{\epsilon}_0 = \det(Q_{1/2}) \cdot \epsilon_0 ~~~~~~\text{and}~~~~~~\mathrm{sign}(\tilde{e}_1, \dots, \tilde{e}_n) = \det(Q_{1/2})\cdot \mathrm{sign}({e}_1, \dots, {e}_n),
\end{equation*}
while it is clear that $\tilde{\Theta}_\gamma = \Theta_\gamma$. Hence also in this case, $\tilde{\Phi}_\gamma = \Phi_\gamma$, which concludes the proof that $\Phi_\gamma$ does not depend on the choices made to define it, hence is well-defined.

\medskip

We are left to show smoothness of the isomorphisms $\Phi_\gamma$ under change of $\gamma$. In fact, it suffices to check continuity in $\gamma$, by the following argument: Because $\Theta_\gamma$ has norm one (see \eqref{NormOfTheta}), it follows that $\Phi_\gamma$ is an isometry for each $\gamma$. Now since both $\Pf$ and $\mathcal{L}$ are real line bundles, there are precisely \emph{two} isometries $\Pf_\gamma \rightarrow \mathcal{L}_\gamma$ for each $\gamma$, and any continuous choice of such is automatically smooth.

Since $\L X$ is modelled on a Fr\'echet space, it is locally metrizable, hence it suffices to check that $\Phi_{\gamma_k} \rightarrow \Phi_\gamma$ whenever $(\gamma_k)_{k \in \N}$ is a sequence converging to $\gamma$. 
By standard results on perturbation theory of linear operators, eigenvalues and eigenspaces depend continuously on the operator (e.g., \cite[Ch.~2,~\S5]{Kato}), and it follows directly from the definition \eqref{DefinitionTheta} of $\Theta$ that $\Theta_{\gamma_n} \rightarrow \Theta_{\gamma}$ unless one of the ``eigenvalues'' $\alpha_j^{(k)}$ degenerates to an element in $\Z$ as $k \rightarrow \infty$. 

To discuss such degenerations, we need some preparations.
For $j=1, \dots, m$ and $\ell \in \Z$, we set
\begin{equation} \label{ScewBasis}
  \begin{aligned}
    V_{2j-1, \ell}(t) &\defeq \cos\bigl(2\pi(\ell+\alpha_j)t\bigr)E_{2j-1}(t) + \sin\bigl(2\pi(\ell+\alpha_j)t\bigr)E_{2j}(t)\\
    V_{2j, \ell}(t) &\defeq -\sin\bigl(2\pi(\ell+\alpha_j)t\bigr)E_{2j-1}(t) + \cos\bigl(2\pi(\ell+\alpha_j)t\bigr)E_{2j}(t).
  \end{aligned}
\end{equation}
Using \eqref{ParallelTranslatesEj324} and the fact that the parallel transport in $T^\prime X$ is also given by the matrix \eqref{MatrixOfParallelTransport} with respect to $e_1^\prime, \dots, e_n^\prime$, one checks that these fit together to continuous vector fields along $\gamma$. 
Fixing $j$ and $\ell$, the vector fields $V_{2j-1, \ell}$ and $V_{2j, \ell}$ span a $\nabla_{\dot{\gamma}}$-invariant subspace of $L^2(\T, \gamma^*TX)$, in which we have the matrix representation
\begin{equation*}
\nabla_{\dot{\gamma}} ~ \widehat{=} ~ 
  \begin{pmatrix} 
    0 & - 2\pi(\ell+\alpha_j) \\ 2\pi(\ell+\alpha_j) & 0
  \end{pmatrix}.
\end{equation*}
The Pfaffian of the endomorphism $\nabla_{\dot{\gamma}}$, restricted to this subspace (as defined in \S\ref{SectionTopDegree}), is therefore given by $- 2(\ell+\alpha_j) V^\prime_{2j-1, \ell} \wedge V^\prime_{2j, \ell}$, where $V^\prime_{2j-1, \ell}$ and $V^\prime_{2j, \ell}$ denote the sections obtained by taking the pointwise metric dual. 

After these preparations, let us return to discussing the case that $\alpha_j^{(k)}$ converges to an element of $\Z$ along the sequence $\gamma_k$. Here we may assume that $\alpha_j^{(k)} \in (0, 1)$. Suppose first that $\alpha_j^{(k)} \searrow 0$. Then using the above discussion and the formula \eqref{DefinitionTheta} for $\Theta$, we may write 
\begin{equation*}
  \Theta_{\gamma_k} = \tilde{\Theta}_{\gamma_k} \wedge \frac{\pi \alpha_j^{(k)}}{\sin(\pi \alpha_j^{(k)})} {V^{\prime (k)}_{2j-1, 0}} \wedge {V^{\prime (k)}_{2j, 0}}
\qquad
\text{and}
\qquad 
\Theta_{\gamma} = \tilde{\Theta}_\gamma \wedge E^\prime_{2j-1} \wedge E^\prime_{2j},
\end{equation*}
with orthonormal bases $e_1^{(k)}, \dots e_n^{(k)}$ of $T_{\gamma_k(0)} X$ and $e_1, \dots, e_n$ of $T_{\gamma(0)} X$ chosen in such a way that $\tilde{\Theta}_{\gamma_k} \rightarrow \tilde{\Theta}_\gamma$. Now since $\alpha_j^{(k)} \searrow 0$ and consequently $V_{2j-1}^{\prime} \rightarrow E_{2j-1}$, $V_{2j}^\prime \rightarrow E_{2j}$, we also obtain $\Theta_{\gamma_k} \rightarrow \Theta_\gamma$.

On the other hand, if $\alpha_j^{(k)} \nearrow 1$, we write 
\begin{equation*}
  \Theta_{\gamma_k} = \tilde{\Theta}_{\gamma_k} \wedge \frac{\pi (\alpha_j^{(k)}-1)}{\sin(\pi \alpha_j^{(k)})} {V^{\prime (k)}_{2j-1, -1}} \wedge {V^{\prime (k)}_{2j, -1}}
\qquad
\text{and}
\qquad 
\Theta_{\gamma} = \tilde{\Theta}_\gamma \wedge E^\prime_{2j-1} \wedge E^\prime_{2j}
\end{equation*}
Hence $\Theta_{\gamma_k} \to - \Theta_\gamma$. However, in the limit $\alpha_j^{(k)} \to 1$, also the sign $\epsilon_0$ defined by \eqref{Eq:FormulaSpinorParallelTransport} changes to $-\epsilon_0$, which cancels the extra minus sign. Hence $\Phi_{\gamma_k} \to \Phi_\gamma$, as desired.
\end{proof}

\begin{remark} \label{RemarkMapsToEachOther}
It is now easy to see that $\Phi$ maps the canonical sections of $\mathcal{L}$ respectively $\Pf$ to each other. Since both of these sections vanish if $m < n/2$ (in particular if $n$ is odd), we may assume that $m = n/2$. Plugging the formula \eqref{FormulaSupertrace} for the supertrace of the parallel transport into $\Phi$ (see \eqref{DefinitionPhiGamma}) and comparing with \eqref{DefinitionPfaffian}, we obtain 
\begin{equation*}
\Phi_\gamma\bigl(\str \bigl([\gamma\|_1^0]^{\Sigma}\bigr)\bigr) = \pf(\nabla_{\dot{\gamma}}), 
\end{equation*}
as desired.
\end{remark}

\section{Definition of the Top Degree Functional} \label{SectionTopDegree}

In this section, we define the top degree functional on a suitable subspace of $L_{\mathrm{alt}}^N(T_\gamma \L X, \R)$ taking values in the Pfaffian line bundle $\Pf$ defined in \S\ref{SectionLineBundles}. We start by discussing the top-degree functional on $L_{\mathrm{alt}}^N(V, \R) = \Lambda^N V^\prime$ for $V$ a finite-dimensional vector space, and then generalize this to the loop space by analogy.

\paragraph{Pfaffians.} Given a skew-symmetric real matrix $X = (x_{ab})_{1 \leq a, b \leq N}$, its Pfaffian is defined by
\begin{equation} \label{Eq:DefinitionPfaffian}
  \pf(X) \defeq \frac{1}{2^{N/2} \left(N/2\right)!} \sum_{\sigma \in S_N} \mathrm{sgn}(\sigma) \prod_{a=1}^{N/2} x_{\sigma_{2a-1}, \sigma_{2a}}
\end{equation}
if $N$ is even, while by convention, $\pf(X) = 0$ for $N$ odd. In particular, for $2\times 2$ matrices, one has
\begin{equation} \label{PfaffianTwoByTwoMatrix}
 \pf \begin{pmatrix} 0 & \lambda \\ -\lambda & 0 \end{pmatrix} = \lambda.
\end{equation}
More generally, for any $d \times d$-matrix $Y$, we have
\begin{equation} \label{BlockPfaffianDeterminant}
  \pf\begin{pmatrix} 0 & Y \\ -Y^t & 0 \end{pmatrix} = (-1)^{d(d-1)/2} \det(Y).
\end{equation}
The formula remains valid also for a non-square matrix $Y$, in which both sides are zero.
The above formula can be easily proved by induction, using the development formula for the determinant and the similar formula
\begin{equation} \label{PfaffianRecursion}
  \pf(X) = \sum_{a=2}^N (-1)^a x_{1a} \cdot \pf(X_{\hat{1}\hat{a}}),
\end{equation}
for the Pfaffian,
where $X_{\hat{1}\hat{a}}$ denotes the matrix obtained from $X$ by removing both the first and $a$-th row and column. We will also use that the Pfaffian has the property
\begin{equation} \label{PfaffianPermutation}
 \pf 
 \begin{pmatrix} 
 x_{\sigma_1 \sigma_1} & \cdots & x_{\sigma_1 \sigma_N} \\ 
 \vdots & & \vdots \\
 x_{\sigma_N \sigma_1} & \cdots & x_{\sigma_N \sigma_N} 
 \end{pmatrix} 
 = 
 \sgn(\sigma) \cdot
 \pf
 \begin{pmatrix} 
 x_{1 1} & \cdots & x_{1 N} \\ 
 \vdots & & \vdots \\
 x_{N 1} & \cdots & x_{N N} 
 \end{pmatrix} 
\end{equation}
for any permutation $\sigma \in S_N$. This follows from the more general formula 
\begin{equation} \label{PfaffianConjugation}
\pf(Y X Y^t) = \det(Y) \pf(X), 
\end{equation}
applied to the permutation matrix $Y$ associated to $\sigma$.

\paragraph{The finite-dimensional top degree functional.} Now if $A$ is a skew-symmetric endomorphism of a (finite-dimensional) abstract Euclidean vector space $V$, one needs the additional datum of an {\em orientation} of $V$ to define the Pfaffian $\pf(A)$. In this case, the Pfaffian is given by \eqref{Eq:DefinitionPfaffian} if $X$ is any matrix representation of $A$ with respect to an {\em oriented} orthonormal basis. This is independent of the choice of such orthonormal basis, but changing the orientation of $V$ will change the sign of $\pf(A)$.

Without the datum of an orientation, one can still define the Pfaffian of $A$, but then it will be an element of the line $\Lambda^{\mathrm{top}} V^\prime$. Namely,  $A$ induces a two-form $\omega_A \in \Lambda^2 V^\prime$ defined by 
\begin{equation} \label{DefinitionOmegaFromA}
\omega_A(v, w) \defeq \langle v, Aw\rangle.
\end{equation}
We can then exponentiate $\omega_A$ in the exterior algebra $\Lambda V^\prime$, to obtain the element $e^{\omega_A}$; the Pfaffian of $A$ is then given by
\begin{equation*}
  \pf(A) \defeq (-1)^{\dim(V)/2} [e^{-\omega_A}]_{\mathrm{top}} \in \Lambda^{\mathrm{top}}V^\prime,
\end{equation*}
the top degree component of the mixed degree differential form $e^{-\omega_A}$. 
An orientation of $V$ provides a volume form $\mathrm{vol} \in \Lambda^{\mathrm{top}} V^\prime$, whence a canonical isomorphism $\Lambda^{\mathrm{top}} V^\prime \cong \R$. Under this isomorphism, $\pf(A)$ is carried to the number defined by \eqref{Eq:DefinitionPfaffian} using the orientation, as above.

The Pfaffian $\pf(A)$ is nonzero only if $A$ is invertible. In general, we have 
\begin{equation*}
e^{-\omega_A} \in \Lambda (\ker(A)_\perp) \subseteq \Lambda V^\prime,
\end{equation*}
where for a subspace $U \subset V$, $U_\perp = \{\vartheta \in V^\prime \mid \forall v \in U:\vartheta(v) = 0 \}$ denotes the annihilator of $U$. The \emph{reduced Pfaffian} of $A$ is then
\begin{equation*}
\pf^\prime(A) = (-1)^{(\dim(V)-\ker(A))/2} [e^{-\omega_A}]_{\mathrm{top}} \in \Lambda^{\mathrm{top}}(\ker(A)_\perp).
\end{equation*}

The following is an extension of Prop.~1 in \cite{Lott}; also see \S1 in \cite{MR836726}. It makes use of the ``Green endomorphism'' for $A^\prime$, which is the skew-symmetric endomorphism of $V^\prime$ given by
\begin{equation} \label{DefinitionGreenEndomorphism}
 G =  \begin{pmatrix} 0 & 0 \\ 0 & (A^\prime)^{-1} \end{pmatrix}
\end{equation}
with respect to the splitting 
\begin{equation*}
V^\prime =  \ker(A^\prime) \oplus \ker(A^\prime)^\perp =   (\ker(A)^\perp)_\perp \oplus \ker(A)_\perp.
\end{equation*}

\begin{proposition} \label{Prop:FermionicWick}
Denote $d= \dim\ker(A) = \dim\ker(A^\prime)$ and let $e^\prime_1, \dots, e^\prime_d$ be an orthonormal basis of $\ker(A^\prime) \subseteq V^\prime$. Given $\vartheta_1, \dots, \vartheta_{N} \in V^\prime$, form the matrix
  \begin{equation} \label{XMatrixFinDim}
    X = \begin{pmatrix}
0 & \cdots & 0 & \vline &   \langle e^\prime_d, \vartheta_1 \rangle & \cdots & \langle e^\prime_d, \vartheta_N \rangle \\
\vdots &  & \vdots &  \vline &   \vdots & & \vdots \\
0 & \cdots & 0 & \vline &    \langle e^\prime_1, \vartheta_1 \rangle & \cdots & \langle e^\prime_1, \vartheta_N \rangle \\
& & & \vline & & \\
\hline& & & \vline & &  \\
-\langle \vartheta_1, e^\prime_d \rangle & \cdots & -\langle \vartheta_1, e^\prime_1 \rangle &  \vline &   \langle \vartheta_1, G \vartheta_1 \rangle & \cdots & \langle \vartheta_1, G \vartheta_N \rangle \\
\vdots &  & \vdots &  \vline &   \vdots & & \vdots \\
 -\langle \vartheta_N, e^\prime_d \rangle & \cdots & -\langle \vartheta_N, e^\prime_1 \rangle &  \vline &  \langle \vartheta_N, G \vartheta_1 \rangle & \cdots & \langle \vartheta_N, G \vartheta_N \rangle
    \end{pmatrix}.
  \end{equation}
  Then we have the formula
  \begin{equation}  \label{TopDegreeFiniteDimensions}
    \bigl[e^{-\omega_A} \wedge \vartheta_1 \wedge \cdots \wedge \vartheta_N\bigr]_{\mathrm{top}} = (-1)^{(N-\dim(V))/2} \, \pf\bigl( X \bigr) \cdot \pf^\prime(A) \wedge e^\prime_1 \wedge \cdots \wedge e^\prime_d,
  \end{equation}
  where $\pf(A^\prime)  \in\Lambda^{\mathrm{top}}(\ker(A^\prime)_\perp)$ is the reduced Pfaffian defined above.
\end{proposition}

In \eqref{TopDegreeFiniteDimensions}, we identify $\pf^\prime(A) \in\Lambda^{\mathrm{top}}(\ker(A^\prime)_\perp) \subseteq \Lambda^{\mathrm{top}- d} V^\prime$, so that the wedge product with $e^\prime_1 \wedge \dots \wedge e^\prime_d$ gives an element in $\Lambda^{\mathrm{top}} V^\prime$. 

In particular, if $A$ is invertible, we have $\pf^\prime(A) = \pf(A)$ (as elements in $\Lambda^{\mathrm{top}} V^\prime$) and
\begin{equation} \label{FormulaAInvertible}
\bigl[e^{-\omega_A} \wedge \vartheta_1 \wedge \cdots \wedge \vartheta_N\bigr]_{\mathrm{top}} = (-1)^{(N-\dim(V))/2}\pf( X ) \cdot \pf(A).
\end{equation}

\begin{proof}
Both sides of \eqref{TopDegreeFiniteDimensions} define a multilinear, alternating map $V^\prime \times \cdots \times V^\prime \to \Lambda^{\mathrm{top}} V^\prime$. For the left hand side, this is so by properties of the wedge product; for the right hand side this follows from the property \eqref{PfaffianPermutation} of the Pfaffian.

In order to verify \eqref{TopDegreeFiniteDimensions}, we therefore may assume that for some $M$, 
\begin{equation} \label{AssumptionVartheta}
\vartheta_1, \dots, \vartheta_M \in \ker(A^\prime) \qquad \text{and} \qquad \vartheta_{M+1}, \dots, \vartheta_N \in \ker(A^\prime)^\perp. 
\end{equation}
Then $[e^{-\omega_A}\wedge \vartheta_1 \wedge \cdots \wedge \vartheta_N]_{\mathrm{top}}$ can be non-zero only if $M = d$, in which case we have
\begin{equation*}
 \bigl[e^{-\omega_A}\wedge \vartheta_1 \wedge \cdots \wedge \vartheta_N\bigr]_{\mathrm{top}} = \bigl[\vartheta_1 \wedge \cdots \wedge \vartheta_{d}\bigr]_{\mathrm{top}} \wedge \bigl[ e^{-\omega_A} \wedge \vartheta_{d+1}\wedge \cdots \wedge \vartheta_{N}\bigr]_{\mathrm{top}},
 \end{equation*} 
where on the right hand side, we identify $\Lambda^{\mathrm{top}}\ker(A^\prime) \wedge \Lambda^{\mathrm{top}} \ker(A^\prime)^\perp = \Lambda^{\mathrm{top}} V^\prime$. The left factor can be written as
\begin{equation} \label{KernelContribution}
  \bigl[\vartheta_1 \wedge \cdots \wedge \vartheta_{d}\bigr]_{\mathrm{top}} 
  = \det \begin{pmatrix} \langle e^\prime_1, \vartheta_1 \rangle & \cdots & \langle e^\prime_1, \vartheta_d \rangle \\
  \vdots & & \vdots \\
  \langle e^\prime_d, \vartheta_1 \rangle & \cdots & \langle e^\prime_d, \vartheta_d \rangle
\end{pmatrix} \cdot e^\prime_1 \wedge \cdots \wedge e^\prime_d.
\end{equation}
On the other hand, under assumption \eqref{AssumptionVartheta}, we have $\langle e^\prime_j, \vartheta_a \rangle = 0$ for each $j$ whenever $a \geq M +1$ and by the definition \eqref{DefinitionGreenEndomorphism} of $G$, we have $\langle \vartheta_a, G \vartheta_b \rangle = 0$ whenever either $a \leq M$ or $b \leq M$. We obtain that the matrix $X$ takes the form
  \begin{footnotesize}
  \begin{equation*}
 \begin{pmatrix}
0 & \cdots & 0 & \vline &   \langle e^\prime_d, \vartheta_1 \rangle & \cdots & \langle e^\prime_d, \vartheta_M \rangle & \vline &  0 & \cdots & 0 \\
\vdots &  & \vdots &  \vline &   \vdots & & \vdots & \vline & \vdots & & \vdots  \\
0 & \cdots & 0 & \vline &    \langle e^\prime_1, \vartheta_1 \rangle & \cdots & \langle e^\prime_1, \vartheta_M \rangle & \vline & 0 & \cdots & 0 \\
& & & \vline & & & & \vline & & & \\
\hline& & & \vline & & & & \vline & & &  \\
-\langle \vartheta_1, e^\prime_d \rangle & \cdots & -\langle \vartheta_1, e^\prime_1 \rangle &  \vline &   0 & \cdots & 0 & \vline & 0 & \cdots & 0 \\
\vdots &  & \vdots &  \vline &   \vdots & & \vdots & \vline & \vdots &  & \vdots  \\
 -\langle \vartheta_M, e^\prime_d \rangle & \cdots & -\langle \vartheta_M, e^\prime_1 \rangle &  \vline &  0 & \cdots & 0 & \vline & 0 & \cdots & 0  \\
 & & & \vline & & & &\vline & &  \\
\hline& & & \vline & &  & & \vline & &  \\
0 & \cdots & 0 & \vline & 0 & \cdots & 0 &  \vline &   \langle \vartheta_{M+1}, G \vartheta_{M+1}\rangle & \cdots & \langle \vartheta_{M+1}, G \vartheta_N \rangle \\
\vdots &  & \vdots &  \vline &\vdots &  & \vdots &  \vline &   \vdots & & \vdots \\
 0 & \cdots & 0 &  \vline & 0 & \cdots & 0 &  \vline &  \langle \vartheta_N, G \vartheta_{M+1} \rangle & \cdots & \langle \vartheta_N, G \vartheta_N \rangle
    \end{pmatrix}.
  \end{equation*}  
\end{footnotesize}
Hence $X$ is in blockdiagonal form and hence $\pf(X)$ is the product of the Pfaffian of the individual blocks. Using \eqref{BlockPfaffianDeterminant} (and the remark below this formula), we see that the Pfaffian of the upper left block vanishes und less $M =d$, and in this case, $\pf(X)$ is given by
\begin{equation*}
(-1)^{d(d-1)/2} \det \begin{pmatrix} \langle e^\prime_d, \vartheta_1 \rangle & \cdots & \langle e^\prime_d, \vartheta_d \rangle \\
  \vdots & & \vdots \\
  \langle e^\prime_1, \vartheta_1 \rangle & \cdots & \langle e^\prime_1, \vartheta_d \rangle
\end{pmatrix} \cdot 
\pf \begin{pmatrix}  \langle \vartheta_{d+1}, G \vartheta_{d+1}\rangle & \cdots & \langle \vartheta_{d+1}, G \vartheta_N \rangle \\
\vdots & & \vdots \\
\langle \vartheta_N, G \vartheta_{d+1} \rangle & \cdots & \langle \vartheta_N, G \vartheta_N \rangle
\end{pmatrix}.
\end{equation*}
Observing that $(-1)^{d(d-1)/2}$ is precisely the sign of the permutation in $S_d$ sending $(1, \dots, d)$ to $(d, \dots, 1)$, and comparing with \eqref{KernelContribution}, it remains to verify the identity
\begin{equation*}
  \bigl[e^{-\omega_A} \wedge \vartheta_{d+1} \wedge \cdots \wedge \vartheta_N\bigr]_{\mathrm{top}} = 
  (-1)^{(N-\dim(V))/2}
  \pf \begin{pmatrix}  \langle \vartheta_{d+1}, G \vartheta_{d+1}\rangle & \cdots & \langle \vartheta_{d+1}, G \vartheta_N \rangle \\
\vdots & & \vdots \\
\langle \vartheta_N, G \vartheta_{d+1} \rangle & \cdots & \langle \vartheta_N, G \vartheta_N \rangle
\end{pmatrix} 
 \pf(A^\prime)
\end{equation*}
in $\Lambda^{\mathrm{top}}(\ker(A^\prime)^\perp)$. In other words, we reduced to the case where  $A$ is invertible, in which case we need to establish the formula \eqref{FormulaAInvertible}. 

By choosing a basis in which  $A$ is in real Jordan normal form (i.e., block-diagonal with $2 \times 2$ skew-symmetric blocks), one readily reduces to the case $\dim(V) = 2$, where
\begin{equation*}
 A = \begin{pmatrix} 0 & \lambda \\ - \lambda & 0 \end{pmatrix} \qquad \text{and} \qquad A^{-1} = \begin{pmatrix} 0 & -\tfrac{1}{\lambda} \\ \tfrac{1}{\lambda} & 0 \end{pmatrix}
\end{equation*}
with respect to some orthonormal basis $e_1$, $e_2$ of $V$. Then ${\omega_A} = \lambda e_1^\prime \wedge e_2^\prime$, where $e_1^\prime$, $e_2^\prime$ is the corresponding dual basis. The cases $N=0$, $N\geq 4$ and $N$ odd are trivial, so the only case left to check is $N=2$. Here we get
\begin{equation} \label{2DcalcInProof}
  [e^{-\omega_A} \wedge e_1^\prime \wedge e_2^\prime]_{\mathrm{top}} = \bigl[(1-\lambda e_1^\prime \wedge e_2^\prime) \wedge e_1^\prime \wedge e_2^\prime\bigr]_{\mathrm{top}} = e_1^\prime \wedge e_2^\prime.
\end{equation}
On the other hand, we have $G e_1^\prime = -\frac{1}{\lambda} e_2^\prime$ and $G e_2^\prime = \frac{1}{\lambda} e_1^\prime$, hence
\begin{equation*}
  \pf(S) = \pf\begin{pmatrix} 0 & \langle e_1^\prime, G e_2^\prime\rangle \\ \langle e_2^\prime, G e_1^\prime \rangle & 0 \end{pmatrix} = \pf \begin{pmatrix} 0 & \tfrac{1}{\lambda} \\ - \tfrac{1}{\lambda} & 0  \end{pmatrix} = \frac{1}{\lambda}
\end{equation*}
Since $\pf(A) = \lambda e_1^\prime \wedge e_2^\prime$, we get $\pf(S) \cdot \pf(A) = e_1^\prime \wedge e_2^\prime$. This coincides with \eqref{2DcalcInProof}, as desired, and finishes the proof. 
\end{proof}

\paragraph{Application to the loop space.} Let now $X$ be a Riemannian spin manifold of dimension $n$. For a smooth loop $\gamma \in \L X$, we let the tangent space $T_\gamma \L X = C^\infty(\T, \gamma^* TX)$ play the role of $V$ in the above discussion. Of course, this space is not finite-dimensional, but it has a natural scalar product, namely the $L^2$ scalar product defined in \eqref{L2ScalarProduct}, which turns it into an infinite-dimensional Euclidean space (or pre-Hilbert space). Observe that the canonical 2-form defined in \eqref{CanonicalTwoForm} can be written as
\begin{equation*}
\omega(V, W) = \langle V, \nabla_{\dot{\gamma}} W\rangle_{L^2}, \qquad V, W \in T_\gamma \L X.
\end{equation*}
Hence comparing with \eqref{DefinitionOmegaFromA}, we see that in this situation, the operator $A$ from above is precisely the covariant derivative $\nabla_{\dot{\gamma}}$. We can equivalently consider the convariant derivative $\nabla^\prime_{\dot{\gamma}}$ acting on $\gamma^*T^\prime X$, which will play the role of $A^\prime$; it has a finite-dimensional kernel and its  Green operator $G$ is a well-defined bounded operator on $L^2(\T, \gamma^*T^\prime X)$, defined by
 \begin{equation} \label{GreenOperatorNabla}
 G =  \begin{pmatrix} 0 & 0 \\ 0 & (\nabla_{\dot{\gamma}}^\prime)^{-1} \end{pmatrix}
 \end{equation}
 with respect to the direct sum decomposition of $L^2(\T, \gamma^*T^\prime X)$ into the kernel of $\nabla_{\dot{\gamma}}^\prime$ (which is the space of parallel one-forms along $\gamma$) and its orthogonal complement.
   Inspecting the finite-dimensional formula \eqref{TopDegreeFiniteDimensions} above, we observe that it can be carried over in a straightforward way to the infinite-dimensional setup, yielding the following definition.

\begin{definition}[The top degree] \label{DefFinalTopDegree}
Let $d = \dim\ker(\nabla_{\dot{\gamma}}^\prime)$ and let $E_1, \dots, E_d$ be an orthonormal basis of $\ker(\nabla_{\dot{\gamma}})$, with dual basis $E^\prime_1, \dots, E^\prime_d \in \ker(\nabla_{\dot{\gamma}}^\prime)$.
Given $\theta_1, \dots, \theta_N \in L^2(\T, \gamma^*T^\prime X) \subset T^\prime_\gamma \L X$, consider the matrix
  \begin{equation} \label{MatrixX}
    X = \begin{pmatrix}
0 & \cdots & 0 & \vline &   \langle E^\prime_d, \theta_1 \rangle_{L^2} & \cdots & \langle E^\prime_d, \theta_N \rangle_{L^2} \\
\vdots &  & \vdots &  \vline &   \vdots & & \vdots \\
0 & \cdots & 0 & \vline &    \langle E^\prime_1, \theta_1 \rangle_{L^2} & \cdots & \langle E^\prime_1, \theta_N \rangle_{L^2} \\
& & & \vline & & \\
\hline& & & \vline & &  \\
-\langle \theta_1, E^\prime_d \rangle_{L^2} & \cdots & -\langle \theta_1, E^\prime_1 \rangle_{L^2} &  \vline &   \langle \theta_1, G \theta_1 \rangle_{L^2} & \cdots & \langle \theta_1, G \theta_N \rangle_{L^2} \\
\vdots &  & \vdots &  \vline &   \vdots & & \vdots \\
 -\langle \theta_N, E^\prime_d \rangle_{L^2} & \cdots & -\langle \theta_N, E^\prime_1 \rangle_{L^2} &  \vline &  \langle \theta_N, G \theta_1 \rangle_{L^2} & \cdots & \langle \theta_N, G \theta_N \rangle_{L^2}
    \end{pmatrix}.
 \end{equation}
The {\em top degree} is the element of $\mathrm{Pf}_\gamma$ defined by the formula
\begin{equation} \label{TopDegreeInfiniteDimensions}
[e^{-\omega} \wedge \theta_1 \wedge \dots \wedge \theta_N]_{\mathrm{top}} \defeq 
(-1)^{(N-n)/2} \cdot \pf(X) \cdot { \pf^\prime (\nabla_{\dot{\gamma}})}  \wedge E^\prime_1 \wedge \cdots \wedge E^\prime_d,
\end{equation}
where   ${ \pf^\prime(\nabla_{\dot{\gamma}})} \in \Pf_\gamma \otimes(\Lambda^{\mathrm{top}} \ker(\nabla_{\dot{\gamma}}))^\prime$ is the reduced Pfaffian, defined in \eqref{DefReducedPfaffian}.
\end{definition}

It is easy to check that the right hand side of \eqref{TopDegreeInfiniteDimensions} does not depend on the choice of basis $E^\prime_1, \dots, E^\prime_d$. However, it is not clear from this definition that the functional $[e^{-\omega} \wedge -]_{\mathrm{top}}$ depends continuously or smoothly on $\gamma$. This is a consequence of Thm.~\ref{ThmMainTheoremLvsPf} in the next section.

\section{The Main Theorem} \label{SectionMainTheorem}

This section is dedicated to proving another formula for the top degree map  from Def.~\ref{DefFinalTopDegree} that connects it to spin geometry. We begin with the following definition,  which generalizes formula \eqref{FormulaqIntro} from the introduction in the case that $M$ is not a spin manifold.

\begin{definition}
Let $X$ be an oriented Riemannian manifold and $\gamma \in \L X$. Given a spin structure $P$ on $\gamma^*TX$ with associated spinor bundle $\Sigma_P$, and elements $\theta_1, \dots, \theta_N \in L^2(\T, \gamma^*T^\prime X)$, define 
\begin{equation} \label{MapQInvariant}
\Ber_P(\theta_1, \dots, \theta_N) \defeq 2^{-N/2}  \sum_{\sigma \in S_N} \mathrm{sgn}(\sigma) \int_{\Delta_N}\str \left([\gamma\|_{\tau_1}^0]^{\Sigma_P} \prod_{a=1}^N \cc\bigl(\theta_{\sigma_a}(\tau_a)\bigr) [\gamma\|_{\tau_{a+1}}^{\tau_a}]^{\Sigma_P}\right)\dd \tau,
\end{equation}
where by convention $\tau_{N+1} = 1$. It is clear that $\Ber$ is alternating, hence descends to a linear functional on $\Lambda^N L^2(\T, \gamma^* T^\prime X)$.
Setting $\Ber(\theta_1, \dots, \theta_N) = [P, \Ber_P(\theta_1, \dots, \theta_N)]  \in \mathcal{L}_\gamma$, we obtain a well-defined linear functional
\begin{equation*}
  \Ber: \Lambda^N L^2(\T, \gamma^* T^\prime X) \longrightarrow \mathcal{L}_\gamma.
\end{equation*}
\end{definition}

Observe that in formula \eqref{MapQInvariant}, the term in the brackets commutes with right multiplication in $\Cl_n$, hence is contained in $\End_{\Cl_n}(\Sigma_{\gamma(0)}) \cong \Cl(T_x X)$. This allows  to take its super trace as in \eqref{DefinitionSupertrace}. Notice also that $\Ber_P(\theta)$ is always zero unless $n$ and $N$ have the same parity. The main result of this section is now the following.

\begin{theorem} \label{ThmMainTheoremLvsPf}
The line bundle isomorphism $\Phi: \mathcal{L} \rightarrow \Pf$ constructed in {\normalfont \S\ref{SectionLineBundles}} sends the functional $\Ber$ to the top degree functional. In other words, for all $\theta_1, \dots, \theta_N \in L^2(\T, \gamma^*T^\prime X)$, we have 
\begin{equation*}
\Phi \bigl(\Ber(\theta_1, \dots, \theta_N)\bigr) = [e^{-\omega} \wedge \theta_1 \wedge \dots \wedge \theta_N]_{\mathrm{top}} \in \Pf_\gamma.
\end{equation*}
\end{theorem}

\begin{remark} 
The parallel transport along $\gamma$ depends smoothly on $\gamma$, which implies that the  functional $\Ber(-)$ in fact depends smoothly on $\gamma$ as well. Therefore, Thm.~\ref{ThmMainTheoremLvsPf} in particular shows that the functional $[e^{-\omega} \wedge -]_{\mathrm{top}}$ is a smooth as well, a fact that is not at all obvious from Def.~\ref{DefFinalTopDegree}.
\end{remark}

We being by observing that the  functional $\Ber$ in fact has an integral kernel. To this end, write 
\begin{equation} \label{SlicedTorus}
\T^N_\circ = \bigl\{ \tau = (\tau_1, \dots, \tau_N) \in \T^N ~\bigl|~ \tau_a \neq \tau_b ~\text{for all} ~ a \neq b \bigr\}
\end{equation}
and after fixing a path $\gamma \in \L X$ and a spin structure $P$ on $\gamma^*TX$, define an element $\ber_P \in C^\infty(\T^N_\circ, \gamma^*T X^{\boxtimes N})$ by the formula
\begin{equation} \label{DefinitionDiracDensity}
  \ber_P(\tau)[\xi_1, \dots, \xi_N] \defeq  2^{-N/2}\,\mathrm{sgn}(\sigma) \str \left([\gamma\|_{\tau_1}^0]^{\Sigma} \prod_{a=1}^N \cc(\xi_{\sigma_a}) [\gamma\|_{\tau_{a+1}}^{\tau_a}]^{\Sigma_P}\right)
\end{equation}
for $\tau = (\tau_1, \dots, \tau_N)\in \T^N_\circ$ and  $\xi_a \in T_{\gamma(\tau_a)}'X$, where $\sigma \in S_N$ is the permutation such that $\tau_{\sigma_1} < \dots < \tau_{\sigma_N}$; this permutation exists and is unique by the requirement that $\tau \in \T_\circ^N \subset \T^N$, i.e., the $\tau_a$ are mutually distinct. 

The definition $\ber(\tau)[\xi_1, \dots, \xi_N] = [P, \ber_P(\tau)[\xi_1, \dots, \xi_N]] \in \mathcal{L}_\gamma$ is then independent of the choice spin structure $P$, hence we obtain an element 
\begin{equation*}
\ber \in C^\infty(\T^N_\circ, \gamma^*T X^{\boxtimes N})^{S_N} \otimes \mathcal{L}_\gamma, 
\end{equation*}
the subspace invariant under the action of $S_N$ given by \eqref{SNActionOnSection}. Now notice that this element is chosen in such a way that
\begin{equation} \label{qAndD}
  \Ber(\theta_1,  \cdots, \theta_N) = \int_{\T^N} \ber(\tau)\bigl[\theta_1(\tau_1), \dots, \theta_N(\tau_N)\bigr] \dd \tau;
\end{equation}
in other words, the functional $\Ber$ is just given by pairing with the element $\ber$ defined above in the $L^2$ sense. A similar result is true for the top degree component, as the following lemma shows.

\begin{lemma} \label{PfDensity}
Analogous to {\normalfont \eqref{qAndD}}, the top degree functional from {\normalfont Def.~\ref{DefFinalTopDegree}} admits an integral kernel. In other words, there exists a section
\begin{equation*}
\ber^\prime \in C^\infty(\T^N_\circ, \gamma^*TX^{\boxtimes N})^{S_N} \otimes \Pf_\gamma.
\end{equation*}
such that the top degree functional is given by the integral
\begin{equation} \label{TopandD}
  [e^{-\omega} \wedge \theta_1 \wedge \cdots \wedge \theta_N]_{\mathrm{top}}
  = \int_{\T^N} \ber^\prime(\tau)\bigl[\theta_1(\tau_1), \dots, \theta_N(\tau_N)\bigr] \dd \tau.
\end{equation}
This section is given as follows. Choose an orthonormal basis $e_1, \dots, e_n$ of $T_{\gamma(0)} X$ such that with respect to this basis, the parallel transport $[\gamma\|_1^0]^{T X}$ is given by the matrix \eqref{MatrixOfParallelTransport} for numbers $\alpha_1, \dots, \alpha_m \notin \Z$. These choices determine an element $\Theta_\gamma \in \Pf_\gamma$, given by \eqref{DefinitionTheta}. 
Let $e_1^\prime, \dots, e_n^\prime$ be the corresponding dual basis and let 
\begin{equation} \label{ParallelTranslatesEj}
E_j(t) = [\gamma\|_t^1]^{TX}e_j, \qquad \text{respectively} \qquad E_j^\prime(t) = [\gamma\|_t^1]^{T^\prime X} e_j^\prime
\end{equation}
be the corresponding parallel translates. For $\tau \in \T^N_\circ$, define the matrix $X(\tau)$ by
  \begin{footnotesize}
    \begin{equation} \label{XMatrixInLemma}
    X(\tau) = \begin{pmatrix}
0 & \cdots & 0 & \vline &  \langle E_n^\prime(\tau_1), \xi_1\rangle & \cdots  & \langle E_n^\prime(\tau_N), \xi_N \rangle \\
\vdots &  & \vdots &  \vline &   \vdots & & \vdots \\
0 & \cdots & 0 & \vline &    \langle E_{2m+1}^\prime(\tau_1), \xi_1 \rangle  & \cdots & \langle E_{2m+1}(\tau_N), \xi_N \rangle\\
& & & \vline & & \\
\hline& & & \vline & &  \\
-\langle E_n^\prime(\tau_1), \xi_1 \rangle & \cdots & -\langle E_{2m+1}^\prime(\tau_1), \xi_1 \rangle &  \vline & \bigl\langle \xi_1, g(\tau_1, \tau_1)\xi_1\bigr\rangle & \cdots & \bigl\langle \xi_1, g(\tau_1, \tau_N)\xi_N\bigr\rangle \\
\vdots &   & \vdots &\vline & \vdots & & \vdots \\
 -\langle E_n^\prime(\tau_N), \xi_N \rangle & \cdots & -\langle E^\prime_{2m+1}(\tau_1), \xi_1 \rangle &  \vline &  \bigl\langle \xi_N, g(\tau_N, \tau_1)\xi_1\bigr\rangle & \cdots & \bigl\langle \xi_N, g(\tau_N, \tau_N)\xi_{N}\bigr\rangle 
    \end{pmatrix},
  \end{equation}
    \end{footnotesize}
  where $g(t, s)$ is the integral kernel of the Green's operator $G$ for $\nabla_{\dot{\gamma}}^\prime$, defined in {\normalfont \eqref{GreenOperatorNabla}}.
In these terms,
\begin{equation} \label{Formulaberprime}
\ber^\prime(\tau)[\xi_1, \dots, \xi_N] = (-1)^{(N-n)/2} \cdot \prod_{j=1}^m \bigl(- 2 \sin(\alpha_j)\bigr) \cdot \pf\bigl(X(\tau)\bigr) \cdot \Theta_\gamma
\end{equation}
\end{lemma}

We remark here that since $\nabla_{\dot{\gamma}}$ is an elliptic differential operator, its Green's operator $G$ is a pseudodifferential operator, hence the corresponding kernel $g$ is smooth on $\T^2_\circ \subset \T^2$; hence $\ber^\prime$ is indeed smooth on $\T^N_\circ$. On the diagonal, we use the convention $g(t, t) = 0$; hence the lower right block of the matrix $X$ from \eqref{XMatrixInLemma} is given explicitly by
\begin{equation} \label{MatrixNoKernel}
\begin{pmatrix}
0 &  \bigl\langle \xi_1, g(\tau_1, \tau_2)\xi_2\bigr\rangle & \cdots & \bigl\langle \xi_1, g(\tau_1, \tau_N)\xi_N\bigr\rangle \\
 \bigl\langle \xi_2, g(\tau_2, \tau_1)\xi_1\bigr\rangle & 0 & \ddots & \vdots \\
   \vdots & \ddots & \ddots & \bigl\langle \xi_{N-1}, g(\tau_1, \tau_N)\xi_N\bigr\rangle  \\
  \bigl\langle \xi_N, g(\tau_N, \tau_1)\xi_1\bigr\rangle & \cdots & \bigl\langle \xi_N, g(\tau_N, \tau_{N-1})\xi_{N-1}\bigr\rangle & 0
    \end{pmatrix}.
\end{equation}

\begin{proof} 
We begin by rewriting the formula of Def.~\ref{DefFinalTopDegree} in terms of the element $\Theta_\gamma$, defined in \eqref{DefinitionTheta}. To this end, choose  $b>0$ in such a way that $4\pi^2 b^2$ is smaller than the smallest positive eigenvalue of $-\nabla_{\dot{\gamma}}^2$. Then, since the sections $E_{2m+1}, \dots, E_n$ of $\gamma^*TX$ form an orthonormal basis of $\ker(\nabla_{\dot{\gamma}})$, we have
\begin{equation*}
  \pf^\prime(\nabla_{\dot{\gamma}}) \wedge E_{2m+1} \wedge \cdots \wedge E_n = \prod_{j=1}^m \bigl(-2\sin(\pi \alpha_j)\bigr) \cdot \Theta_{\gamma}.
\end{equation*}
Now given $\theta_1, \dots, \theta_N$, let $X$ denote the corresponding matrix \eqref{MatrixX} and for $\tau \in \T^N$,  $X(\tau)$ denotes the matrix given by \eqref{XMatrixInLemma} with $\xi_a = \theta_a(\tau_a)$. Then by the definition \eqref{TopDegreeInfiniteDimensions}, 
\begin{equation*}
 [e^{-\omega} \wedge \theta_1 \wedge \cdots \wedge \theta_N]_{\mathrm{top}}
= (-1)^{(N-n)/2} \cdot \pf(X) \cdot \prod_{j=1}^m \bigl(-2\sin(\pi \alpha_j)\bigr) \cdot \Theta_{\gamma}.
\end{equation*}
 It remains to verify that
\begin{equation} \label{PullOutIntegral}
  \pf(X) = \int_{\T^N} \pf \bigl(X(\tau)\bigr) \dd\tau.
\end{equation}
To this end, observe that
\begin{equation*}
  \langle E_j^\prime, \theta_a \rangle_{L^2} = \int_\T \bigl\langle E_j^\prime(\tau_a), \theta_a(\tau_a)\bigr\rangle \dd \tau_a 
\end{equation*}
and
\begin{equation*}
  \langle \theta_a, G \theta_b \rangle_{L^2} = \int_{\T^2_\circ} \bigl\langle \theta_a(\tau_a), g(\tau_a, \tau_b) \theta(\tau_b) \bigr\rangle \dd \tau_a \dd \tau_b.
\end{equation*}
Now since the Pfaffian is linear in each entry, we can ``pull out'' the integrals, and \eqref{PullOutIntegral} follows.
\end{proof}

We now collected all necessary preliminaries in order to prove our main theorem.

\begin{proof}[of Thm.~\ref{ThmMainTheoremLvsPf}]
By \eqref{qAndD} and \eqref{TopandD}, it suffices to compare the densities $\ber$ and $\ber^\prime$. Explicitly, we only need to show that for all $\tau = (\tau_1, \dots, \tau_N) \in \T_\circ^N$ and all $\xi_a \in T_{\gamma(\tau_a)}' X$, we have
\begin{equation} \label{EqualityOfDensities}
  \Phi\bigl(\ber(\tau)[\xi_1, \dots, \xi_N]\bigr) = \ber^\prime(\tau)[\xi_1, \dots, \xi_N].
\end{equation}
By skew-symmetry of both functionals, the densities $\ber$ and $\ber^\prime$ are invariant under the signed action of $S_N$, hence in fact, it suffices to verify \eqref{EqualityOfDensities} for all $\tau \in \T_\circ^N$ with $\tau_1 < \dots < \tau_N$, which is what we will do.

In order to apply Lemma~\ref{PfDensity}, choose an orthonormal basis $e_1, \dots, e_n$ of $T_{\gamma(0)}X$ such that the parallel transport $[\gamma\|_1^0]^{TX}$ takes the form \eqref{MatrixOfParallelTransport} for numbers $\alpha_1, \dots, \alpha_m \notin \Z$, let $e_1^\prime, \dots, e_n^\prime$ be the corresponding dual basis, and let $E_1, \dots, E_n$, respectively $E_1^\prime, \dots, E_n^\prime$ be the sections of $\gamma^*TX$, respectively $\gamma^*T^\prime X$ obtained by parallel translation, as in \eqref{ParallelTranslatesEj}.
With a view on the definition \eqref{DefinitionPhiGamma} of $\Phi_\gamma$ and the formula \eqref{Formulaberprime} for $\ber^\prime$, we are left to
verify  the equation
\begin{equation} \label{ScalarEquation}
\begin{aligned}
\ber_P(\tau)[\xi_1, \dots, \xi_N] = ~& (-1)^{(N-n)/2} \cdot \epsilon_0 \cdot  \sign(e_1, \dots, e_n)   \prod_{j=1}^m \bigl( - 2 \sin(\alpha_j)\bigr)  \pf\bigl(X(\tau)\bigr),
\end{aligned}
\end{equation}
for some spin structure $P$ on $\gamma^* TX$, where $X(\tau)$ is the matrix from \eqref{XMatrixInLemma} and $\epsilon_0 \in \{\pm 1\}$ is the sign from formula \eqref{Eq:FormulaSpinorParallelTransport}. Let us choose $e_1, \dots, e_n$ positively oriented so that $\sign(e_1, \dots, e_n) = 1$. 

Let us take a look at the left hand side of \eqref{ScalarEquation}. From the fact that the Levi-Civita connection on the spinor bundle satisfies a product rule with respect to Clifford multiplication, it follows that
the parallel transports in $T^\prime X$ and $\Sigma_P$ are compatible in the sense that for any $\xi \in T_{\gamma(t)} X$, we have
\begin{equation*}
  \cc\bigl(\xi\bigr) = [\gamma\|_1^t]^{\Sigma_P}\cc([\gamma\|_t^1]^{T^\prime X}\xi) [\gamma\|_t^1]^{\Sigma_P}.
\end{equation*}
 Therefore, for $0 \leq \tau_1 < \dots < \tau_N$ and $\xi_a \in T_{\gamma(\tau_a)}X$, we have
\begin{equation} \label{ParalleltransportCalc}
\ber_P(\tau_1, \dots, \tau_N)[\xi_1, \dots, \xi_N] = (-1)^N  2^{-N/2}\str \Bigl( [\gamma\|_1^0]^{\Sigma_P} \cc(\xi_1^{\|}) \cdots \cc(\xi_N^{\|})\Bigr),
\end{equation}
where 
\begin{equation} \label{ParalleltranslatesXi}
\xi_a^{\|} = [\gamma\|_{\tau_a}^1]^{T^\prime X} \xi_a \in T_{\gamma(0)}'X
\end{equation}
are the elements of $T_{\gamma(0)}X$ obtained by parallel translating $\xi_a$. 

By multi-linearity, it suffices to verify \eqref{ScalarEquation} in the case that for each $a$, we have $\xi_a^{\|} = e_{j_a}^\prime$ for some index $1\leq j_a \leq n$.
We will now show that under this assumption, both sides of \eqref{ScalarEquation} can be written as a product, with factors corresponding to the invariant subspaces of the parallel transport around $\gamma$. These are the subspaces $V_j \subset T_{\gamma(0)} X$, $j=0, 1, \dots, m$ given by
\begin{equation*}
  V_0 = \mathrm{span}\{e_{2m+1}, \dots, e_n\}, \qquad \text{and} \qquad V_j = \mathrm{span}\{e_{2j-1}, e_{2j}\}, \quad j=1, \dots, m.
\end{equation*}
We then have the tensor product factorization 
\begin{equation*}
\Cl(T_{\gamma(0)}X) \cong \Cl(V_0)\otimes \Cl(V_1) \otimes \cdots \otimes \Cl(V_m)
\end{equation*}
of Clifford algebras, and it follows from the formula \eqref{DefinitionSupertrace} for the supertrace that if an element $a$ of this Clifford algebra is written as $a = a_0 \otimes a_1 \otimes \cdots \otimes a_m$ with respect to this decomposition, then its supertrace factors as 
\begin{equation*}
  \str(a) = \str_{V_0}(a_0) \str_{V_1}(a_1) \cdots \str_{V_m}(a_m),
\end{equation*}
where $\str_{V_j}$ is the supertrace of the Clifford algebra $\Cl(V_j)$. 

For $j=0, \dots, m$, let $I_j \subset \{1, \dots, N\}$ be the set of indices $a$ such that $\xi_a^{\|} \in V_j$. Let $\sigma \in S_N$ be the unique permutation such that for $a < b$, we have $\sigma_a > \sigma_b$ if and only if $a \in I_j$, but $b \in I_i$ for some $i < j$. Then by the formula \eqref{Eq:FormulaSpinorParallelTransport} for the parallel transport, we have
\begin{equation*}
[\gamma\|_1^0]^{\Sigma_P} \cc(\xi_1^{\|}) \cdots \cc(\xi_N^{\|}) = \epsilon_0 \, \sgn(\sigma) \cdot \underbrace{\prod_{a \in I_0} \cc(\xi_a^{\|})}_{\in \Cl(V_0)} \cdot \prod_{j=1}^m \underbrace{\bigl(\cos(\pi\alpha_j) + \sin(\pi\alpha_j)\cc_{2j-1}\cc_{2j}\bigr) \prod_{a \in I_j} \cc(\xi_a^{\|})}_{\in \Cl(V_j)}.
\end{equation*}
Summing up, we obtain
\begin{equation} \label{FactorizationF}
\ber_P(\tau_1, \dots, \tau_N)[\xi_1, \dots, \xi_N] = \epsilon_0 \cdot \sgn(\sigma) \cdot \ber^{(0)} \cdot \ber^{(1)}\cdots \ber^{(m)},
\end{equation}
where $\ber^{(j)}$ is $2^{-|I_j|/2}$ times the supertrace of the piece in $\Cl(V_j)$ from the previous formula (this is written out explicitly in \eqref{FormulaFj} below).

Regarding the right hand side of \eqref{ScalarEquation}, first observe that that the parallel translates of the vector spaces $V_j$ form pairwise orthogonal vector bundles $\mathcal{V}_j$ over $\T$ that are preserved by $\nabla_{\dot{\gamma}}$. Therefore the Green's function also maps sections of $\mathcal{V}_j$ to sections of $\mathcal{V}_j$, which entails that $\langle \xi_a, g(\tau_a, \tau_b) \xi_b\rangle = 0$ whenever $\xi_a^{\|}$ and $\xi_b^{\|}$ are contained in different $V_j$. 
Similarly, we have 
$\langle E_j^\prime(\tau_a), \xi_a \rangle = \langle e_j^\prime, \xi_a^{\|} \rangle = 0$
 unless $\xi_a^{\|} \in V_0$.
 By these observations, after applying the permutation $\sigma \in S_N$ to the indices of the elements $\xi_a$ appearing in the matrix $X(\tau)$, the resulting matrix has block diagonal form, with blocks $X^{(0)}$, $X^{(1)}$, \dots, $X^{(m)}$. Here the matrix $X^{(0)}$ can be described as the matrix obtained from $X(\tau)$ by deleting all rows and columns containing a $\xi_a$ with $a \notin I_0$, while the matrices $X^{(j)}$ are obtained from $X$ by deleting the first $d$ rows and columns, as well as all rows and columns containing a $\xi_a$ with $a \notin I_j$.
 Applying the permutation formula \eqref{PfaffianPermutation}, we therefore obtain
\begin{equation} \label{FactorizationPfaffian}
  \pf\bigl(X(\tau)\bigr) = \sgn(\sigma) \cdot \pf(X^{(0)}) \cdot \pf(X^{(1)}) \cdots \pf(X^{(m)}).
\end{equation}
Plugging \eqref{FactorizationF} and \eqref{FactorizationPfaffian} into \eqref{ScalarEquation} we are left to show the identities
\begin{align}\label{Identity0}
  \ber^{(0)} &= \pf(X^{(0)}),  \\
  \ber^{(j)} &= 2\sin(\alpha_j) \cdot \pf(X^{(j)}), \quad j=1, \dots, m. \label{Identityj}
\end{align}

\medskip

{\em Case $j \in \{1, \dots, m\}$}: Explicitly, the formula derived above for $\ber^{(j)}$ is
\begin{equation} \label{FormulaFj}
\ber^{(j)} =  2^{-|I_j|/2}\, \str_{V_j} \Bigl(\bigl(\cos(\pi \alpha_j) + \sin(\pi \alpha_j) \cc_{2j-1} \cc_{2j}\bigr) \prod_{a \in I_j} \cc(\xi_{a}^{\|})\Bigr).
\end{equation}
We  observe that both sides of \eqref{Identityj} are  zero unless $|I_j|$ is even: $\ber_j$ is because $V_j$ is two-dimensional and hence the supertrace $\str_{V_j}$ is an even functional; $\pf(X^{(j)})$ is, because the matrix $X^{(j)}$ has dimensions  $|I_j|\times |I_j|$ and the Pfaffian of an odd-dimensional matrix is zero. 

We first calculate $\ber^{(j)}$, assuming that $|I_j|$ is even.  By assumption, for each $a \in I_j$, we have $\xi_{a}^{\|} = e_{2j-1}^\prime$ or $e_{2j}^\prime$; let us say that $e_{2j-1}^\prime$ appears $k$ times this way and that $e_{2j}^\prime$ appears $|I_j|-k$ times. Let $a_1 < \dots < a_{|I_j|}$ run through the elements of $I_j$ and let $\rho$ be the permutation of $I_j$ such that 
\begin{equation*}
  \xi_{\rho_{a_1}}^{\|} = \dots = \xi_{\rho_{a_k}}^{\|} = e_{2j-1}^\prime, \qquad  \xi_{\rho_{a_{k+1}}}^{\|} = \dots = \xi_{\rho_{a_{|I_j|}}}^{\|} = e_{2j}^\prime,
\end{equation*}
and such that for $\rho_{a_1} < \dots < \rho_{a_k}$ and $\rho_{a_{k+1}} < \dots < \rho_{a_{|I_j|}}$.
Then by the choice of $\rho$, 
\begin{equation*}
  \cc(\xi_{a_1}^{\|}) \cdots \cc(\xi_{a_{|I_j|}}^{\|}) = \sgn(\rho)\cdot \cc(\xi_{\rho_{a_1}}^{\|}) \cdots \cc(\xi_{\rho_{a_{|I_j|}}}^{\|})= \sgn(\rho) \cdot \cc_{2j-1}^k \cc_{2j}^{|I_j|-k}.
\end{equation*}
By the formula \eqref{DefinitionSupertrace} for the supertrace, we therefore find 
\begin{equation} \label{ResultFj}
\begin{aligned}
\ber^{(j)} &= \sgn(\rho) \cdot  2^{-|I_j|/2} \cdot \str_{V_j}\Bigl(\bigl(\cos(\pi \alpha_j) + \sin(\pi \alpha_j) \cc_{2j-1} \cc_{2j}\bigr) \cc_{2j-1}^k \cc_{2j}^{|I_j|-k}\Bigr) \\
&= \sgn(\rho) \cdot (-2)^{-|I_j|/2} \cdot \begin{cases} 2 \sin(\pi \alpha) & k~\text{even} \\ -2 \cos(\pi \alpha) & k~\text{odd}.\end{cases}
\end{aligned} 
\end{equation}

We now calculate $\pf(X^{(j)})$, still assuming that $|I_j|$ is even.
Since the parallel transport $[\gamma\|_1^0]^{T^\prime X}$ in $\gamma^*T^\prime X$ is given by the matrix \eqref{MatrixOfParallelTransport} with respect to the dual basis $e_1^\prime, \dots, e_n^\prime$, its restriction to $V_j^\prime \subset T^\prime_{\gamma(0)} X$ is given by
\begin{equation*}
  R = \begin{pmatrix} \cos(2\pi \alpha_j) & -\sin(2\pi \alpha_j) \\ \sin(2 \pi \alpha_j) & \cos(2\pi \alpha_j) \end{pmatrix}.
\end{equation*}
Now, smooth sections of the subbundle $\V_j^\prime \subset \gamma^*T^\prime X$ can be identified with smooth functions $\theta: \R \rightarrow \R^2$ satisfying the quasi-periodic boundary condition
\begin{equation} \label{BoundaryConditionAlpha}
  \theta(t)  = R\, \theta(t+1).
\end{equation}
We claim that the Green's operator and Green's function to the problem $\theta^\prime = 0$ with respect to these boundary conditions are given by
\begin{equation*}
  (G \theta)(t)  = \int_0^1 g(t, s) \theta(s) \dd s, \qquad 
  g(t, s) = H(t-s)\mathbf{1} + (R^*-\mathbf{1})^{-1},
\end{equation*}
where $H$ is the Heaviside step function. To see this, it suffices to check that $(G\theta)^\prime = \theta$ and (for $\theta$ satisfying the boundary conditions) $G\theta^\prime = \theta$, and that for any $\theta$, $G\theta$ satisfies the boundary condition \eqref{BoundaryConditionAlpha}; we leave this to the reader.
We have  
\begin{equation*}
(R^*-\mathbf{1})^{-1} = -\frac{1}{2} \begin{pmatrix} 1 & \cot(\pi \alpha) \\ -\cot(\pi \alpha) & 1 \end{pmatrix}.
\end{equation*}
Therefore, we have
\begin{equation*}
\begin{aligned}
   \bigl\langle \xi_a, g(\tau_a, \tau_b)\xi_b &\bigr\rangle  &= -\frac{1}{2} \times
   \begin{cases}
    1 & \xi_a^{\|} = \xi_b^{\|} = e_{2j-1}^\prime~\text{and}~ \tau_a < \tau_b,\\
    -1 & \xi_a^{\|} = \xi_b^{\|} = e_{2j-1}^\prime~\text{and}~ \tau_a > \tau_b,\\
    1 & \xi_a^{\|} = \xi_b^{\|} = e_{2j}^\prime~\text{and}~  \tau_a < \tau_b ,\\
    -1 & \xi_a^{\|} = \xi_b^{\|} = e_{2j}^\prime~\text{and}~ \tau_a > \tau_b,\\
      \cot(\pi \alpha) & \xi_a^{\|} = e_{2j-1}^\prime~\text{and}~ \xi_b^{\|} = e_{2j}^\prime, \\
     -\cot(\pi \alpha) & \xi_a^{\|} = e_{2j}^\prime~\text{and}~ \xi_b^{\|} = e_{2j-1}^\prime.
   \end{cases}
\end{aligned}
\end{equation*}
The matrix $X^{(j)}$ has the form \eqref{MatrixNoKernel}, with $\xi_a$ running through $a \in I_j$. Now let $\tilde{X}^{(j)}$ be the matrix obtained from $X^{(j)}$ by permuting the rows and columns using the permutation $\rho$. By choice of $\rho$, we have $\tau_{\rho_{a_1}} < \dots < \tau_{\rho_{a_k}}$ and $\tau_{\rho_{a_{k+1}}} < \dots < \tau_{\rho_{a_{|I_j|}}}$, hence with a view on \eqref{MatrixNoKernel}, we have
\begin{equation*}
  \tilde{X}^{(j)} = -\frac{1}{2} 
  \begin{pmatrix} 
  0 & 1 & \cdots & 1 & \vline & \cot(\pi \alpha_j) & \cdots & \cdots & \cot(\pi \alpha_j) \\
   -1  & \ddots & \ddots &\vdots & \vline & \vdots & & & \vdots \\
   \vdots  &  \ddots   & \ddots & 1 & \vline &  \vdots & & & \vdots \\
   -1  &  \dots    &  -1         & 0 & \vline & \cot(\pi \alpha_j) & \cdots & \cdots & \cot(\pi \alpha_j) \\
     &      &           &  & \vline &  &  &  &  \\
   \hline & & & & \vline & & & & \\
 -\cot(\pi \alpha_j)  & \dots &  \dots  & -\cot(\pi \alpha_j) & \vline & 0 & 1 & \cdots & 1\\
   \vdots& & & \vdots& \vline &  -1  & \ddots & \ddots & \vdots \\
  \vdots & & & \vdots& \vline &  \vdots  &  \ddots          & \ddots & 1 \\
 -\cot(\pi \alpha_j)  &\dots  &\dots & -\cot(\pi \alpha_j)& \vline & -1   & \dots           &  -1          & 0
  \end{pmatrix},
\end{equation*}
where the upper left block has dimension $k \times k$. 

\begin{lemma} \label{LemmaPfaffian1}
Let $0 \leq k \leq M$ be integers with $M$ even and $c \in \R$. Consider the $M \times M$-matrix
\begin{equation*}
S_{k, M}(c) =   \begin{pmatrix} 
  0 & 1 & \cdots & 1 & \vline & c & \cdots & \cdots & c \\
   -1  & \ddots & \ddots &\vdots & \vline & \vdots & & & \vdots \\
   \vdots  &  \ddots   & \ddots & 1 & \vline &  \vdots & & & \vdots \\
   -1  &  \dots    &  -1         & 0 & \vline & c & \cdots & \cdots & c \\
   \hline
 -c  & \dots &  \dots  & -c & \vline & 0 & 1 & \cdots & 1\\
   \vdots& & & \vdots& \vline &  -1  & \ddots & \ddots & \vdots \\
  \vdots & & & \vdots& \vline &  \vdots  &  \ddots          & \ddots & 1 \\
 -c  &\dots  &\dots & -c& \vline & -1   & \dots           &  -1          & 0
  \end{pmatrix},
\end{equation*}
where the upper left block has dimension $k \times k$. Then 
\begin{equation} \label{PfaffianLemmaFormula}
\pf\bigl(S_{k, M}(c)\bigr) = \begin{cases} 1 & k ~\text{even}  \\ c & k~\text{odd}. \end{cases}
\end{equation}
\end{lemma}

\begin{proof}
It is straightforward to check the proposition in the case that $M \leq 2$ and for any $M$ in the case that $k=0$ or $M$.
The result then follows from induction on $M$, using the development formula \eqref{PfaffianRecursion}. Namely, assume that we know the claim for $M-2$ and any $0 \leq k \leq M-2$. Then we have
\begin{equation*}
\pf\bigl(S_{k, M}(c)\bigr) = \sum_{a=2}^k (-1)^a \cdot  \pf\bigl(S_{k-2, M-2}(c)\bigr) + \sum_{a=k+1}^M (-1)^a c \cdot \pf\bigl(S_{k-1, M-2}(c)\bigr).
\end{equation*}
Assume \eqref{PfaffianLemmaFormula} for $M-2$ and any $k$. Then in the case that $k$ is odd, we get
\begin{equation*}
  \pf\bigl(S_{k, M}(c)\bigr) = \underbrace{\sum_{a=2}^k (-1)^a}_{=0} \cdot  \,c + \underbrace{\sum_{a=k+1}^M (-1)^a}_{=1} \cdot \,c = c,
\end{equation*}
while if $k$ is even,
\begin{equation*}
\pf\bigl(S_{k, M}(c)\bigr) = \underbrace{\sum_{a=2}^k (-1)^a}_{=1} + \underbrace{\sum_{a=k+1}^M (-1)^a}_{=0} \cdot \,c^2 = 1.
\end{equation*}
This finishes the proof.
\end{proof}

We observe that $\tilde{X}^{(j)} = -\frac{1}{2}S_{k, |I_j|}(\cot(\pi\alpha_j))$. Applying \eqref{PfaffianPermutation} and Lemma~\ref{LemmaPfaffian1} therefore yields
\begin{equation*}
\begin{aligned}
2 \sin(\pi \alpha_j) \cdot \pf(X^{(j)}) &= \sgn(\rho) \cdot 2 \sin(\pi \alpha_j) \cdot (-2)^{-|I_j|/2}\cdot \pf\bigl(S_{k, |I_j|}(\cot(\pi\alpha_j)\bigr)\\
&=  \sgn(\rho) \cdot (-2)^{-|I_j|/2} \times \begin{cases} 2 \sin(\pi\alpha_j) & k ~\text{even}  \\ 
2 \cos(\pi \alpha) & k~\text{odd}. \end{cases}
\end{aligned}
\end{equation*}
This equals $\ber^{(j)}$, as calculated in \eqref{ResultFj}, and hence verifies \eqref{Identityj}.

\medskip

{\em Case $j=0$:} It is left to verify \eqref{Identity0}. In the case that $n = 2m$, both sides of \eqref{Identity0} are equal to one by convention and nothing is to check. In the case that $d=n-2m > 0$, it is convenient to make another simplification. Namely, observe that the vector bundle $\V_0^\prime \subset \gamma^* T^\prime X$ splits up into $d=n-2m$ subbundles invariant under parallel transport, corresponding to the vectors $e_{2m+1}^\prime, \dots, e_n^\prime$. Therefore, similarly to the arguments above, we can decompose 
\begin{equation*}
\ber^{(0)} = \pm \ber^{(0)}_{2m+1} \cdots \ber^{(0)}_n
\end{equation*}
where the term $\ber^{(0)}_{k}$ corresponds to those indices $a$ with $\xi_a^{\|} = e_k^\prime$ and the sign is that of a suitable permutation. A similar decomposition can be made for the right hand side of \eqref{Identity0}, with the same sign in front. Comparing the terms individually, this has the effect of reducing to the case that $d=1$, which we assume subsequently in order to simplify notation.

Let $a_1 < \dots < a_{|I_0|}$ run through the elements of $I_0$. Then under the above simplifying assumptions,
\begin{equation*}
  \ber^{(0)} 
  = 2^{-|I_0|/2} \str_{V_0}\Bigl( \cc(\xi_{a_1}^{\|})\cdots \cc(\xi_{a_{|I_0|}}^{\|}) \Bigr) 
  = 2^{-|I_0|/2} \str_{V_0}(\cc_n^{|I_0|}),
\end{equation*}
where $\str_{V_0}$ is the supertrace of the Clifford algebra $\Cl(V_0) = \Cl(\mathrm{span}\{e_n\})$.
Since $V_0$ is one-dimensional, the supertrace is odd, hence $\ber^{(0)}$ is non-zero only if $|I_0|$ is odd, and in this case,
\begin{equation} \label{ResultF0}
  \ber^{(0)} = 2^{-|I_0|/2} (-1)^{(|I_0|-1)/2} \str(\cc_n) = (-2)^{-(|I_0|-1)/2}.
\end{equation}
This has to be compared to $\pf(X^{(0)})$. We will use the following lemma.

\begin{lemma} \label{LemmaPfaffian2}
For $M \in \N$ and numbers $\tau_1, \dots, \tau_M$, define the $M \times M$ matrix
\begin{equation*} 
S(\tau_1, \dots, \tau_M) = \begin{pmatrix}
  0 &  \tau_1 - \tau_2 +\tfrac{1}{2} & \cdots  &  \tau_1 - \tau_M + \tfrac{1}{2} \\
\tau_2 - \tau_1 - \tfrac{1}{2} & 0 & \ddots   & \vdots \\
\vdots  &   \ddots& \ddots & \tau_{M-1} - \tau_{M} +\tfrac{1}{2} \\
\tau_M - \tau_{1}  - \tfrac{1}{2} & \dots & \tau_{M} - \tau_{M-1} - \tfrac{1}{2}& 0
\end{pmatrix}.
\end{equation*}
Then if $M$ is even,
\begin{equation}  \label{EqLemmaEven}
  \pf\bigl(S(\tau_1, \dots, \tau_M)\bigr) = 2^{1 - M/2} \sum_{a=1}^M (-1)^{a-1} \tau_a + 2^{-M/2},
\end{equation}
while if $M$ is odd, then
\begin{equation} \label{EqLemmaOdd}
  \sum_{a=1}^M (-1)^{a+1} \pf\bigl(S(\tau_1, \dots, \hat{\tau}_a, \dots \tau_M)\bigr) = 2^{-(M-1)/2}.
\end{equation}
\end{lemma}

\begin{proof}
We consider the case that $M$ is even. The case $M=2$ is clear with a view on \eqref{PfaffianTwoByTwoMatrix}. The case $M \geq 4$ even follows by induction, using the recursion \eqref{PfaffianRecursion}. Namely, assume that \eqref{EqLemmaEven} is known for $M-2$. Then 
\begin{equation*}
\begin{aligned}
\pf\bigl(S(\tau_1, \dots, \tau_M)\bigr) &= \sum_{a=2}^M (-1)^a (\tau_1 - \tau_a + \tfrac{1}{2}) \pf(\tau_2, \dots, \hat{\tau}_a, \dots, \tau_N) \\
&= 2^{2-M/2}\sum_{a=2}^M (-1)^a (\tau_1 - \tau_a + \tfrac{1}{2}) \left(\sum_{b=2}^{a-1} (-1)^{b} \tau_b + \sum_{b=a+1}^M (-1)^{b-1} \tau_b + \tfrac{1}{2}\right).
\end{aligned}
\end{equation*}
The right hand side is a polynomial of degree two in $\tau_1, \dots, \tau_M$, the linear term of which is $2^{-M/2}$, as claimed. The quadratic part of this polynomial is $2^{2-M/2}$ times
\begin{equation*}
\begin{aligned}
  \sum_{2 \leq b < a \leq M} (-1)^{a+b} (\tau_1 - \tau_a)\tau_b + \sum_{2 \leq a < b \leq M} (-1)^{a + b-1} (\tau_1 - \tau_a)\tau_b.
\end{aligned}
\end{equation*}
Observe that the parts of the sums not containing $\tau_1$ cancel. We are left with $\tau_1$ times
\begin{equation*}
\begin{aligned}
  \sum_{2 \leq b < a \leq M} (-1)^{a+b} \tau_b + \sum_{2 \leq a < b \leq M} (-1)^{a + b-1} \tau_b
  = \sum_{b=2}^M (-1)^b \tau_b \cdot \left( \sum_{a = b+1}^M (-1)^a + \sum_{a=2}^{b-1} (-1)^{a-1} \right).
\end{aligned}
\end{equation*}
Now since $M$ is even, the two sums on the right always cancel, for any $b$. We conclude that the quadrating terms of $\pf(S(\tau_1, \dots, \tau_M))$ are zero. The linear terms are $2^{1-M/2}$ times
\begin{equation*}
\begin{aligned}
\sum_{a=1}^M& (-1)^a (\tau_1 - \tau_a) + \sum_{2 \leq b \leq a \leq M} (-1)^{a+b} \tau_b + \sum_{2 \leq a \leq b \leq M} (-1)^{a+b} (-1)^{a+b+1}\tau_b  \\
&= \sum_{a=1}^M (-1)^a \tau_1 + \sum_{a=1}^M (-1)^{a-1} \tau_a  + \sum_{b=2}^M (-1)^b \tau_b \cdot \left( \sum_{a = b+1}^M (-1)^a + \sum_{a=2}^{b-1} (-1)^{a-1} \right)
\end{aligned}
\end{equation*}
The first sum vanishes because $M$ is even, and the third sum vanishes as before as the two sums in the bracket cancel out for each $b$. This shows that $\pf(S(\tau_1, \dots, \tau_M))$ also has the desired linear term and finishes the proof of \eqref{EqLemmaEven}.

To prove the second formula, observe that by the previous result, the left hand side of \eqref{EqLemmaOdd} is a polynomial of degree one in the $\tau_a$. Its linear term is
\begin{equation*}
 2^{-(M-1)/2} \sum_{a=1}^M (-1)^{a+1}  = 2^{-(M-1)/2}
\end{equation*}
as claimed, where we used that $M$ is odd. The linear terms are $2^{1-M/2}$ times
\begin{equation*}
\sum_{a=1}^M (-1)^{a+1} \left(\sum_{b=1}^{a-1} (-1)^{b-1} \tau_b + \sum_{b=a+1}^{M} (-1)^{b} \tau_b\right)  = 
\sum_{b=1}^M (-1)^b\tau_b \left( \sum_{a=b+1}^M (-1)^{a} + \sum_{a=1}^{b-1} (-1)^{a+1} \right).
\end{equation*} 
Since $M$ is odd, the term in the brackets on the right hand side vanishes, which finishes the proof.
\end{proof}

Now, smooth sections of the bundle $\mathcal{V}_0^\prime \subset \gamma^*T^\prime X$ over $\T$ can be identified with smooth functions $\theta : \R \rightarrow \R$ such that $\theta(t+1) = \theta(t)$. It is straightforward to check that the Green's function to this problem is given by
\begin{equation*}
 (G\theta)(t) = \int_0^1 g(t, s) \theta(s) \dd s, \qquad 
 g(t, s) = H(t - s) + s - t - \tfrac{1}{2}.
\end{equation*}
Since under our assumptions, we have $\tau_a < \tau_b$ if $a < b$, we have for  $a, b \in I_0$ that
\begin{equation*}
  \bigl\langle \theta(\tau_a), G(\tau_a, \tau_b)\theta(\tau_b)\bigr\rangle = 
  \begin{cases} 
  \tau_b - \tau_a - \tfrac{1}{2} & \text{if}~~ a < b \\
  \tau_b - \tau_a + \tfrac{1}{2}   & \text{if}~~ a > b.
  \end{cases}
\end{equation*}
Moreover, $\langle e_n^\prime, \xi_a^{\|}\rangle = \langle e_n^\prime, e_n^\prime\rangle = 1$ for all $a \in I_0$. The matrix $X^{(0)}$ is therefore 
\begin{equation*}
X^{(0)} = 
\begin{pmatrix} 
0 & 1 & \cdots & 1 \\ 
-1 & & & \\
\vdots & & S(\tau_{a_1}, \dots, \tau_{a_{|I_0|}}) & \\
-1 & & &
\end{pmatrix},
\end{equation*}
where $S(\tau_{a_1}, \dots, \tau_{a_{|I_0|}})$ is the matrix from Lemma~\ref{LemmaPfaffian2}. 
Using the development formula \eqref{PfaffianRecursion} and applying \eqref{EqLemmaOdd}, we therefore obtain
\begin{equation} \label{MatrixXV0}
\begin{aligned}
(-1)^{(|I_0|-1)/2} \cdot \pf(X^{(0)}) &=  \sum_{i=1}^{|I_0|} (-1)^{i+1} \pf\bigl(S(\tau_{a_1}, \dots, \hat{\tau}_{a_i}, \dots, \tau_{a_{|I_0|}})\bigr) \\
&=    (-2)^{-(|I_0|-1)/2}.
\end{aligned}
\end{equation}
This indeed coincides with \eqref{ResultF0} and shows \eqref{Identity0}.

\medskip

We have now verified \eqref{Identity0} and \eqref{Identityj}, so the proof is complete.
\end{proof}


\bibliography{LiteraturLoopSpaces}

\begin{thebibliography}{10}

\bibitem{Atiyah}
M.~F. Atiyah.
\newblock Circular symmetry and stationary-phase approximation.
\newblock {\em Ast\'erisque}, 1(131):43--59, 1985.

\bibitem{Bismut1}
J.-M. Bismut.
\newblock Index theorem and equivariant cohomology on the loop space.
\newblock {\em Comm. Math. Phys.}, 98(2):213--237, 1985.

\bibitem{BismutDH}
J.-M. Bismut.
\newblock Duistermaat-{H}eckman formulas and index theory.
\newblock In {\em Geometric aspects of analysis and mechanics}, volume 292 of
  {\em Progr. Math.}, pages 1--55. Birkh\"{a}user/Springer, New York, 2011.

\bibitem{MR1905424}
J.~Feldman, H.~Kn\"orrer, and E.~Trubowitz.
\newblock {\em Fermionic functional integrals and the renormalization group},
  volume~16 of {\em CRM Monograph Series}.
\newblock American Mathematical Society, Providence, RI, 2002.

\bibitem{MR1689252}
V.~W. Guillemin and S.~Sternberg.
\newblock {\em Supersymmetry and equivariant de {R}ham theory}.
\newblock Mathematics Past and Present. Springer, Berlin, 1999.

\bibitem{GueneysuLudewig}
B.~{G{\"u}neysu} and M.~{Ludewig}.
\newblock {The Chern Character of $\theta$-summable Fredholm Modules over dg
  Algebras and the Supersymmetric Path Integral}.
\newblock \href{https://arxiv.org/abs/1901.04721}{arXiv:1901.04721}.

\bibitem{HanischLudewig1}
F.~Hanisch and M.~Ludewig.
\newblock A rigorous construction of the supersymmetric path integral
  associated to a compact spin manifold.
\newblock \href{https://arxiv.org/abs/1709.10027}{arXiv:1709.10027}.

\bibitem{Kato}
T.~Kato.
\newblock {\em Perturbation theory for linear operators}.
\newblock Classics in Mathematics. Springer, Berlin, 1995.
\newblock Reprint of the 1980 edition.

\bibitem{KrieglMichor}
A.~Kriegl and P.~W. Michor.
\newblock {\em The convenient setting of global analysis}, volume~53 of {\em
  Mathematical Surveys and Monographs}.
\newblock American Mathematical Society, Providence, RI, 1997.

\bibitem{LawsonMichelsohn}
H.~B. Lawson, Jr. and M.-L. Michelsohn.
\newblock {\em Spin geometry}, volume~38 of {\em Princeton Mathematical
  Series}.
\newblock Princeton University Press, Princeton, NJ, 1989.

\bibitem{Lott}
J.~Lott.
\newblock Supersymmetric path integrals.
\newblock {\em Comm. Math. Phys.}, 108(4):605--629, 1987.

\bibitem{MR836726}
V.~Mathai and D.~Quillen.
\newblock Superconnections, {T}hom classes, and equivariant differential forms.
\newblock {\em Topology}, 25(1):85--110, 1986.

\bibitem{Meyer}
R.~Meyer.
\newblock {\em Local and analytic cyclic homology}, volume~3 of {\em EMS Tracts
  in Mathematics}.
\newblock European Mathematical Society (EMS), Z\"{u}rich, 2007.

\bibitem{Murrey}
M.~K. Murray.
\newblock Bundle gerbes.
\newblock {\em J. London Math. Soc. (2)}, 54(2):403--416, 1996.

\bibitem{PratWaldron}
A.~F. Prat~Waldron.
\newblock {\em Pfaffian line bundles over loop spaces, spin structures and the
  index theorem}.
\newblock PhD thesis, University of California, Berkeley, 2009.
\newblock
  \href{http://gateway.proquest.com/openurl?url_ver=Z39.88-2004&rft_val_fmt=info:ofi/fmt:kev:mtx:dissertation&res_dat=xri:pqdiss&rft_dat=xri:pqdiss:3383418}{ProQuest}.

\bibitem{StolzTeichner}
S.~Stolz and P.~Teichner.
\newblock The spinor bundle on the loop space.
\newblock
  \url{https://people.mpim-bonn.mpg.de/teichner/Math/Surveys_files/MPI.pdf},
  2005.

\bibitem{MR0225131}
F.~Tr\`eves.
\newblock {\em Topological vector spaces, distributions and kernels}.
\newblock Academic Press, New York-London, 1967.

\bibitem{Waldorf1}
K.~Waldorf.
\newblock Spin structures on loop spaces that characterize string manifolds.
\newblock {\em Algebr. Geom. Topol.}, 16(2):675--709, 2016.

\end{thebibliography}

\end{document}